\def\thmhead@plain#1#2#3{%
  \thmname{#1}\thmnumber{\@ifnotempty{#1}{ }\@upn{#2}}%
  \thmnote{ {\the\thm@notefont#3}}}
\let\thmhead\thmhead@plain
\newcounter{AppCounter}
\def\restrict#1{\raise-.5ex\hbox{\ensuremath|}_{#1}}
\newtheorem{lemma}{Lemma}[section]
\newtheorem{proposition}[lemma]{Proposition}
\newtheorem{remark-definition}[lemma]{Remark-Definition}
\newtheorem{theorem}[lemma]{Theorem}
\newtheorem{corollary}[lemma]{Corollary}
\newtheorem{proposition-conjecture}[lemma]{Proposition-conjecture}
\theoremstyle{definition}
\newtheorem{definition}[lemma]{Definition}
\newtheorem{remark}[lemma]{Remark}
\newcommand{\marginnote}[1]
{
}
\newcounter{cy}
\newcounter{bk}
\newcounter{dps}
\title{The helicity uniqueness conjecture in 3D hydrodynamics}
\author{Boris Khesin\thanks{Department of Mathematics, University of Toronto, Toronto, ON M5S 2E4, Canada; e-mail:  \tt{khesin@math.toronto.edu}},
Daniel Peralta-Salas\thanks{Instituto de Ciencias Matem\'{a}ticas, Consejo Superior de Investigaciones Cient\'{i}ficas,
28049 Madrid, Spain; e-mail:  \tt{dperalta@icmat.es}},
  and Cheng Yang\thanks{Department of Mathematics and Statistics, McMaster University, Hamilton, ON L8S 4K1, Canada, and the Fields Institute for Research in Mathematical Sciences, Toronto, ON M5T 3J1, Canada;
 e-mail: \tt{yangc74@math.mcmaster.ca}
  }}
\date{}
\begin{document}

\maketitle
\begin{abstract}
We prove that the helicity is the only regular Casimir function for
the coadjoint action of the volume-preserving diffeomorphism group $\text{SDiff}(M)$ on smooth exact divergence-free
vector fields on a closed three-dimensional manifold  $M$. More precisely, any regular $C^1$ functional defined on the space of $C^\infty$
(more generally, $C^k$, $k\ge 4$) exact divergence-free vector fields and invariant under arbitrary
volume-preserving diffeomorphisms can be expressed as a $C^1$ function of the helicity.
This gives a complete description of Casimirs for adjoint and coadjoint actions of $\text{SDiff}(M)$ in 3D
and completes the proof of Arnold-Khesin's 1998 conjecture for a manifold $M$ with trivial first homology group. Our proofs make use of different tools from the theory of dynamical systems, including normal forms for divergence-free vector fields, the Poincar\'e-Birkhoff theorem, and a division lemma for vector fields with hyperbolic zeros.
\end{abstract}

\section{Introduction} \label{intro}

Let $(M,g)$ be a closed (i.e., compact and without boundary) three-dimensional Riemannian manifold with
volume form $d\mu$. The motion of an inviscid and incompressible fluid filling
$M$ is governed by the  {\it Euler equations}:
\begin{equation}\label{idealEuler}
\left\{\begin{array}{l}
\partial_t v+\nabla_v v=-\nabla p\,,\\\
{\rm div}\, v=0\,,
\end{array}\right.
\end{equation}
where $\nabla_v v$ is the Riemannian covariant derivative of the field $v$ along itself, ${\rm div}$ is the divergence operator, and the pressure function $p$ is uniquely determined up to an additive constant.

In the 1960's,  Moreau~\cite{Mo61} and Moffatt~\cite{Mo69} discovered a  conserved quantity for the Euler equations, the {\it helicity}, which is a functional defined for the vorticity vector field $\omega:=\text{curl}\, v$  as follows:
$$
\mathcal H(\omega):=\int_M \omega\cdot \text{curl}^{-1} \omega\,d\mu=\int_M \omega\cdot v\,d\mu\,.
$$
Here the dot $\cdot$ denotes the (pointwise) Riemannian inner product of two fields. The operator $\text{curl}$ is defined via differential forms as $i_{\text{curl}\, v}d\mu=dv^\flat$, where $v^\flat$ is the metric-dual $1$-form of $v$.
The origin of the helicity conservation is Kelvin's law of  the vorticity transport by the flow.
Actually, it is easy to check that $\mathcal H(\Phi_*\omega)=\mathcal H(\omega)$ for any (orientation-preserving) volume-preserving diffeomorphism $\Phi:M\to M$, i.e., the helicity is invariant under arbitrary volume-preserving transformations, rather
than under the specific family of diffeomorphisms defined by a fluid flow. In geometric terms, this makes the helicity a Casimir functional for the coadjoint action of the group of $C^\infty$ volume-preserving diffeomorphisms on exact divergence-free vector fields. Note that the helicity $\mathcal H$ is defined only on exact divergence-free vector fields, since it requires  finding the field-potential, ${\rm curl}^{-1}$.

In the 1998 monograph~\cite[Section I.9]{arkh} it was  conjectured that the helicity is the only Casimir function
for the group $\text{SDiff}(M)$ of $C^\infty$ volume-preserving diffeomorphisms of a closed 3D manifold $M$.
In this paper we prove this conjecture for manifolds $M$ with the trivial first homology group (where all divergence-free vector fields are exact), and under some natural regularity assumptions on the invariants.
Furthermore, we establish similar results for the adjoint action of the group $\text{SDiff}(M)$ and for
$C^k$-fields with $k\geq 4$.
The case of $M$ with $H_1(M, \mathbb R)\not=0$ involves such an invariant of the adjoint action as the rotation number of a vector field, see~\cite{Arn69}, and we shall describe its interrelation with helicity.

To formulate the main result one needs the notion of regular integral invariants (see Definition~\ref{def:regInt1}), which, roughly speaking, means  $C^1$ functionals on the space $\mathfrak X^k_{ex}$ of exact $C^k$ vector fields that are invariant under volume-preserving diffeomorphisms and whose (Fr\'{e}chet) derivative is an integral operator with continuous kernel, see~\cite{EnPeTo}.

\begin{theorem}\label{T:main}
Let $\mathcal F $ be a regular integral invariant on the space $\mathfrak X^k_{ex}$  (endowed with the Whitney $C^k$ topology).
Then $\mathcal F $ is a function of the helicity provided that $k\geq 4$ (including the case $k=\infty$), i.e. there exists a $C^1$ function $f: \mathbb R \rightarrow \mathbb R$  such that $\mathcal F (w) =f(\mathcal H(w))$ for any $w\in\mathfrak X_{ex}^k$.
\end{theorem}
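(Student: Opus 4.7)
\emph{Representing the derivative and the invariance equation.} The regularity hypothesis on $\mathcal F$ provides, at each $w\in\mathfrak X^k_{ex}$, a continuous vector field $K_w$ (depending continuously on $w$) such that $D\mathcal F_w[u]=\int_M u\cdot K_w\,d\mu$ on $\mathfrak X^k_{ex}$. Since $H_1(M,\mathbb R)=0$, the $L^2$-orthogonal complement of $\mathfrak X_{ex}$ is the space of gradients, so $K_w$ is determined modulo $\nabla C^1(M)$ and the intrinsic object is $J_w:=\operatorname{curl}K_w$. The analogous kernel for the helicity is $K_{\mathcal H,w}=2\operatorname{curl}^{-1}w$, with $J_{\mathcal H,w}=2w$. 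Differentiating the $\operatorname{SDiff}(M)$-invariance of $\mathcal F$ along all divergence-free $X$ gives $\int_M[X,w]\cdot K_w\,d\mu=0$. Using the identity $i_{[X,w]}d\mu=d(i_Xi_wd\mu)$ (valid whenever $X$ and $w$ are divergence-free) and Stokes' theorem on the closed $M$, this is equivalent to $\int_M X\cdot(w\times J_w)\,d\mu=0$ for every such $X$. Hodge decomposition then forces
\[
w\times J_w=\nabla P_w
\]
for some scalar $P_w$, which is therefore a joint first integral of $w$ and of $J_w$.

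\emph{Reducing to pointwise proportionality with a constant factor.} The crux of the proof is to show that, for $w$ in a $C^k$-dense subset $\mathcal D\subset\mathfrak X^k_{ex}$, any continuous divergence-free $J$ with $w\times J=\nabla P$ must satisfy $J\equiv c\,w$ with $c\in\mathbb R$. I would take $\mathcal D$ to consist of fields whose zeros are all hyperbolic and whose elliptic periodic orbits have area-preserving return maps satisfying the hypotheses of the Poincaré--Birkhoff theorem, producing an accumulation of periodic orbits in each elliptic component, together with transversality conditions on the stable/unstable manifolds of the hyperbolic zeros. The function $P_w$ is constant along each such periodic orbit, along each invariant manifold, and along each KAM torus; local normal forms near the critical elements of $w$ glue these constants together across $M$, forcing $P_w\equiv\mathrm{const}$ and hence $w\times J_w\equiv 0$. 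On $\{w\neq 0\}$ this yields $J_w=\mu(x)\,w(x)$ for a scalar $\mu$; since $\operatorname{div}J_w=\operatorname{div}w=0$, one has $w\cdot\nabla\mu=0$, so $\mu$ is again a first integral of $w$ and thus constant on each connected component of $\{w\neq 0\}$. The division lemma for vector fields with hyperbolic zeros then extends $\mu$ continuously across the zero set, producing a single constant $c(w)$ with $J_w\equiv c(w)\,w$ throughout $M$.

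\emph{From pointwise proportionality to $\mathcal F=f(\mathcal H)$.} The identity $J_w=c(w)\,w$ means $K_w$ and $2\operatorname{curl}^{-1}w$ differ by a gradient, so $D\mathcal F_w=\tfrac12 c(w)\,D\mathcal H_w$ as functionals on $\mathfrak X^k_{ex}$. Along any $C^1$ path $w_t$ in $\mathcal D$ one has $\tfrac{d}{dt}\mathcal F(w_t)=\tfrac12 c(w_t)\tfrac{d}{dt}\mathcal H(w_t)$. Paths of constant helicity inside $\mathcal D$ show that $\mathcal F$ is constant on each component of $\{\mathcal H=h\}\cap\mathcal D$, so the level sets of $\mathcal F$ and $\mathcal H$ coincide in $\mathcal D$, and $c(w)$ depends only on $\mathcal H(w)$. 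Integrating the resulting ODE produces a $C^1$ function $f$ with $\mathcal F=f\circ\mathcal H$ on $\mathcal D$, and density of $\mathcal D$ together with the continuity of $\mathcal F$ and $\mathcal H$ extends the identity to all of $\mathfrak X^k_{ex}$.

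\emph{Main obstacle.} The delicate step is the central claim: $P_w\equiv\mathrm{const}$ and $\mu\equiv c(w)$ for $w\in\mathcal D$. Since the dynamics of a typical 3D divergence-free vector field is rich — KAM tori coexist with chaotic components and support many smooth first integrals — one cannot rule out nonconstant first integrals by invoking ergodicity. The hard content of the proof is to assemble the Poincaré--Birkhoff theorem, the local normal forms near zeros and periodic orbits, and the division lemma at hyperbolic zeros into a global rigidity statement on first integrals for fields in a class that is dense in the Whitney $C^k$ topology, and to verify that the normal-form information can indeed be patched across the mixed elliptic/hyperbolic structure of a generic $w$.
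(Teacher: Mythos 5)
Your setup (deriving $\operatorname{curl}K(w)\times w=\nabla J_w$ from infinitesimal $\mathrm{SDiff}(M)$-invariance) and your endgame (proportionality of $D\mathcal F$ and $D\mathcal H$, then constancy of $\mathcal F$ on helicity levels) match the paper. The genuine gap is in the central step, and it is not merely an omitted detail but a strategy that is unlikely to work as stated. You propose to exhibit a $C^k$-dense class $\mathcal D$ of fields each of which, by virtue of its own dynamics (hyperbolic zeros, Poincar\'e--Birkhoff orbits, normal forms ``glued across $M$''), admits no nonconstant first integral of the relevant regularity. You yourself flag this gluing as the ``hard content'' and do not supply it. The difficulty is real: for $k\ge 4$ the KAM theorem rules out a dense set of topologically transitive fields (this is precisely why the $C^1$ argument of Enciso--Peralta-Salas--Torres de Lizaur does not carry over), a generic field has elliptic islands filled with invariant tori, and excluding nonconstant $C^{k-1}$ first integrals supported on such structures, for a set of fields dense in the Whitney $C^k$ topology, is not something your sketch addresses.

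The paper avoids proving any such rigidity statement for individual fields. Its mechanism (Proposition~\ref{P:FI}) is a perturbation argument that crucially exploits the continuity of the kernel $K$ as a map on $\mathfrak X^k_{ex}$: if $J_0$ were nonconstant for some $w_0$ with hyperbolic zeros, then Sard's theorem plus Poincar\'e--Hopf (Lemma~\ref{lem:reg_levelset}) produce a domain $U\cong\mathbb T^2\times(0,1)$ trivially fibred by invariant tori of $w_0$; one then perturbs $w_0$ inside $U$ only (Sternberg's normal form on the tori, a small twist, and a suspended Poincar\'e--Birkhoff perturbation, Lemma~\ref{lem:torus}) to create a hyperbolic periodic orbit in $U$. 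Continuity of $K$ forces the perturbed field's first integral $J$ to remain $C^k$-close to $J_0$, hence (Thom's isotopy theorem) to still fibre $U$ by invariant tori of the perturbed field --- contradicting the presence of the hyperbolic periodic orbit. The perturbed field is allowed to have nonconstant first integrals elsewhere; no claim that ``generic fields admit no first integrals'' is needed. You would have to import this idea, or an equivalent one, to close the gap. Separately, your last step asserts that constancy of $\mathcal F$ on \emph{components} of helicity levels yields $\mathcal F=f(\mathcal H)$; this requires the path-connectedness of $\mathcal H^{-1}(c)$ in $\mathfrak X^k_{ex}\setminus\{0\}$ (Proposition~\ref{lem:pathconn}, proved via the infinite dimensionality of the positive and negative eigenspaces of $\operatorname{curl}$), which your proposal does not address; moreover your paths are required to stay inside $\mathcal D$, an extra constraint the paper avoids by first extending the proportionality $(D\mathcal F)_w=\mathcal C(w)(D\mathcal H)_w$ to all of $\mathfrak X^k_{ex}\setminus\{0\}$ by density and continuity.
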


\begin{corollary}\label{C:main}
The helicity $\mathcal H$ is the only regular Casimir (i.e. coadjoint invariant) for the group $\text{SDiff}(M)$ of $C^\infty$
volume-preserving diffeomorphisms of a closed 3D manifold $M$ (i.e. the conjecture of {\rm \cite{arkh}} holds) provided that $H_1(M, \mathbb R)=0$.
\end{corollary}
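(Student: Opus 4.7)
The plan is to show that Corollary~\ref{C:main} is essentially a direct translation of Theorem~\ref{T:main} into the language of the Lie-group action of $\mathrm{SDiff}(M)$, once one exploits the topological hypothesis $H_1(M,\mathbb R)=0$. The first step is to identify the space on which a Casimir has to be defined. By definition, a Casimir for the coadjoint action of $\mathrm{SDiff}(M)$ is a functional on the (smooth part of the) dual of its Lie algebra $\mathfrak{svect}(M)$, which in three dimensions is naturally modeled by exact divergence-free vector fields (or, equivalently, by exact $2$-forms $dv^\flat$). This is the standard Arnold setup recalled in the introduction, so a regular Casimir is precisely a regular integral invariant, in the sense of Definition~\ref{def:regInt1}, on $\mathfrak X^\infty_{ex}$.

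The second step is to use the assumption $H_1(M,\mathbb R)=0$ to promote the conclusion from \emph{exact} divergence-free vector fields to \emph{all} divergence-free vector fields. On a closed orientable $3$-manifold, Poincaré duality gives $H^2_{dR}(M)\cong H_1(M,\mathbb R)=0$, so the isomorphism $w\mapsto \iota_w d\mu$ between divergence-free vector fields and closed $2$-forms identifies the divergence-free part with the exact part; equivalently, every smooth divergence-free $w$ admits a vector potential $v$ with $\mathrm{curl}\,v=w$, so $\mathfrak X^\infty_{ex}$ coincides with the full space of smooth divergence-free fields. Consequently, helicity $\mathcal H$ is defined on the whole space under consideration, and it is already a regular integral invariant (and an $\mathrm{SDiff}(M)$-Casimir) by the discussion in the introduction.

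The third and final step is to invoke Theorem~\ref{T:main} with $k=\infty$: any regular integral invariant $\mathcal F$ on $\mathfrak X^\infty_{ex}$ equals $f\circ \mathcal H$ for some $C^1$ function $f:\mathbb R\to\mathbb R$. Combining this with Step~2, every regular Casimir for the coadjoint $\mathrm{SDiff}(M)$-action on $\mathfrak X^\infty_{\mathrm{div}}=\mathfrak X^\infty_{ex}$ is a $C^1$ function of helicity, while helicity itself is a regular Casimir. This means that, up to the unavoidable freedom of composing with a scalar function, $\mathcal H$ is the unique regular Casimir, which is exactly the Arnold--Khesin conjecture of~\cite{arkh} in the stated topological class of manifolds. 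There is no real obstacle in this deduction: the substantive work is entirely in Theorem~\ref{T:main}; the only delicate point worth checking is that the notion of \emph{regular Casimir} used in~\cite{arkh} coincides with the regularity class of Definition~\ref{def:regInt1}, so that the hypotheses of Theorem~\ref{T:main} are exactly what a regular Casimir satisfies.
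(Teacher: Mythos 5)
Your proposal is correct and follows essentially the same route as the paper: use $H_1(M,\mathbb R)=0$ (hence also $H^1(M,\mathbb R)=0$) to obtain the chain of identifications $\mathfrak X^*\cong\mathfrak X^*_{ex}\cong\mathfrak X_{ex}\cong\mathfrak X$, under which a regular Casimir becomes a regular integral invariant on $\mathfrak X_{ex}$, and then apply Theorem~\ref{T:main}. The one point you wave at but the paper verifies explicitly is that, under the identification $\mathfrak X_{ex}^*\cong\mathfrak X_{ex}$ via $i_\xi d\mu=d\alpha$, the coadjoint action becomes the push-forward (Lie bracket) action on the vorticity field, i.e.\ ${\rm ad}^*_v\xi=[\xi,v]$, so that coadjoint invariance is exactly the $\Phi_*$-invariance required in Definition~\ref{def:regInt1}.
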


Concerning Corollary~\ref{C:main}, we recall that when $H_1(M)=0$, the dual space to the Lie algebra of the group $\text{SDiff}(M)$ can be identified with the space of
$C^\infty$ exact divergence-free vector fields endowed with the Whitney $C^\infty$ topology (see Section~\ref{sec:pfCor} for details).

An analogous result for $k=1$, i.e., for functionals acting on $\mathfrak X^1_{ex}$, was proved in~\cite{EnPeTo} (see also~\cite{Ku14,Ku16} for the case of manifolds with boundary and divergence-free vector fields admitting a global cross section).
However, none of the results for $C^k$ implies the results for other $k$.
The use of the space $\mathfrak X^1_{ex}$ (endowed with the Whitney $C^1$ topology) is key in the proof of~\cite{EnPeTo}, which is based on the existence of a residual subset of vector fields with special dynamical properties. Indeed, the strategy
for $C^1$-fields makes use of a theorem by M. Bessa~\cite{Besa} showing that there exists a dense set of vector fields
in $\mathfrak X^1_{ex}$ that are topologically transitive; such a dense set of fields with a dense orbit cannot exist when one considers exact divergence-free vector fields of class $C^4$ or higher, as a consequence of the KAM theorem.

For the proof of Theorem~\ref{T:main} for $C^k$ with $k\geq 4$ we use different (and somewhat more elementary) tools from the theory of dynamical systems, including normal forms for divergence-free vector fields, the Poincar\'e-Birkhoff theorem, and a division lemma for vector fields with hyperbolic zeros. The assumption $k\geq 4$ is due to the use of Sard's theorem, which requires sufficiently high regularity of the functions under consideration. In particular, it cannot be applied in the $C^1$ setting, where the corresponding functions are only $C^0$. (Also note that although any $C^1$-functional acting on $\mathfrak X^1_{ex}$ can also be evaluated at elements of $\mathfrak X^k_{ex}$, $k\geq 2$, the kernel of the derivative of such a functional on $\mathfrak X^k_{ex}$ maps $C^k$ vector fields into $\mathfrak X^1_{ex}$ only, and not into $\mathfrak X^k_{ex}$, so the second assumption in Definition~\ref{def:regInt1} below  would not be fulfilled.) The cases with $k=2$ or $3$ remain open.

Given a divergence-free vector field $v$ on $M$ one can define its rotation class $\lambda(v)\in H_1(M, \mathbb R)$, see \cite{Arn69} and Section \ref{sec:pfCor}. Namely, for a three-dimensional manifold one can consider the cohomology class of the closed 2-form $i_v d\mu$ in $H^2(M,\mathbb R)$, which is Poincar\'e isomorphic to $H_1(M, \mathbb R)$.

\begin{corollary}\label{cor:adjoint}
The helicity $\mathcal H$ is the only regular invariant of the adjoint action for the group $\text{SDiff}(M)$
on the space $\mathfrak X^k_{ex}$ ($k\geq 4$) of exact divergence-free vector fields. For
a manifold with nontrivial homology $H_1(M)\not=0$ the rotation class $\lambda(v)\in H_1(M)$ of a vector field $v$
is a regular invariant of the adjoint action of the identity connected component of the  group $\text{SDiff}(M)$.
\end{corollary}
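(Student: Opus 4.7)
My plan for Corollary \ref{cor:adjoint} is to deduce both assertions from Theorem \ref{T:main} together with two elementary ingredients: the identification of the adjoint representation of $\text{SDiff}(M)$ with pushforward on vector fields, and the homotopy invariance of the action of diffeomorphisms on de Rham cohomology. No analysis beyond what enters Theorem \ref{T:main} itself should be needed.

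For the first assertion, I would begin by noting that the Lie algebra of $\text{SDiff}(M)$ consists of divergence-free vector fields and that the adjoint representation is given by $\mathrm{Ad}_\Phi v = \Phi_* v$. The key preliminary observation is that this action preserves the subspace $\mathfrak X^k_{ex}$: because $\Phi$ is volume-preserving, one has $i_{\Phi_* v}\,d\mu = \Phi_*(i_v\,d\mu)$, and pushforward by a diffeomorphism sends exact 2-forms to exact 2-forms. Consequently, a $C^1$ functional on $\mathfrak X^k_{ex}$ invariant under the adjoint action of $\text{SDiff}(M)$ is precisely a $C^1$ functional on $\mathfrak X^k_{ex}$ invariant under pushforward by every volume-preserving diffeomorphism, that is, a regular integral invariant in the sense of Definition \ref{def:regInt1}. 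Theorem \ref{T:main} then delivers the first assertion.

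For the second assertion, I would start from the definition $\lambda(v) = \mathrm{PD}[i_v\,d\mu] \in H_1(M,\mathbb R)$. Invariance under $\text{SDiff}_0(M)$ is a short computation: using the identity $i_{\Phi_* v}\,d\mu = \Phi_*(i_v\,d\mu)$ together with the standard fact that a diffeomorphism isotopic to the identity induces the identity on de Rham cohomology, one finds $[i_{\Phi_* v}\,d\mu] = [i_v\,d\mu]$ for every $\Phi \in \text{SDiff}_0(M)$, hence $\lambda(\Phi_* v) = \lambda(v)$. To certify $\lambda$ as a regular invariant, I would fix a basis of $H_1(M,\mathbb R)$ and represent each component of $\lambda(v)$ as a pairing $v \mapsto \int_{\Sigma_j} i_v\,d\mu$ against a closed surface $\Sigma_j$ representing the Poincar\'e dual basis of $H_2(M,\mathbb R)$; each such component is linear in $v$ (hence coincides with its own Fr\'echet derivative) and admits a manifest integral-kernel representation $v \mapsto \int_M v \cdot X_j\,d\mu$, with $X_j$ a smooth vector field obtained from a form representative of the Poincar\'e dual of $\Sigma_j$.

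The main bookkeeping point, and the only place where some care is needed, is to reconcile the vector-valued rotation class with the scalar-valued setup of Definition \ref{def:regInt1}; this is handled by reading that definition componentwise in the chosen basis of $H_1(M,\mathbb R)$, using the real-valued pairings above. Once this is settled, the corollary requires no new input beyond the substantial content of Theorem \ref{T:main}.
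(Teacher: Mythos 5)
Your proposal is correct and follows essentially the same route as the paper: the adjoint action preserves $\mathfrak X^k_{ex}$ (the paper checks this infinitesimally via $i_{[w,v]}d\mu=-dL_v\alpha$, you check it at the group level via $i_{\Phi_*v}d\mu=\Phi_*(i_vd\mu)$, which is equivalent), so adjoint-invariant regular functionals on $\mathfrak X^k_{ex}$ are exactly the regular integral invariants of Definition~\ref{def:regInt1} and Theorem~\ref{T:main} applies, while the rotation class is the cohomology class of $i_vd\mu$ and is preserved by diffeomorphisms isotopic to the identity. Your explicit componentwise kernel representation $(\lambda(v))_j=\int_M i_vd\mu\wedge h_j=\pm\int_M v\cdot h_j^\sharp\,d\mu$ is the same pairing the paper uses and correctly certifies regularity.
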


The rotation class $\lambda$ defines a natural projection $\lambda: \mathfrak X^k\to H_1(M)$, where $\mathfrak X^k_{ex}=\lambda^{-1}(0)$. So intuitively, the helicity (well-defined on $\mathfrak X^k_{ex}$) and the rotation class
together constitute the set of regular invariants of the adjoint action. However for fields with nontrivial rotation class only relative helicity (of one field with respect to another) is well defined, see \cite{arkh}  for details.

This paper is organized as follows. In Section~\ref{sec:main_result} we introduce the  definition of regular integral invariants and prove the main theorem  modulo Proposition~\ref{P:FI}. This proposition is proved in Section~\ref{S:auxiliar}, thus completing the proof of the main result.
In Section~\ref{sec:pfCor} we prove the corollaries and discuss in more detail the rotation class and the setting of adjoint and coadjoint actions on exact and non-exact divergence-free vector fields. Finally, in Section~\ref{sec:settings} we recall the geometric formulations of ideal hydrodynamics and magnetohydrodynamics, and explain how the previous results extend to the latter setting.

\bigskip

\noindent {\bf Acknowledgments.} B.K. is grateful to A. Izosimov for fruitful discussions.
B.K. was partially supported by an NSERC research grant.  D.P.-S. was supported by the grants MTM2016-76702-P (MINECO/FEDER) and Europa Excelencia EUR2019-103821 (MCIU), and partially supported by the ICMAT--Severo Ochoa grant SEV-2015-0554. A part of this work was done while C.Y. was visiting
the Instituto de Ciencias Matem\'{a}ticas (ICMAT) in Spain. C.Y. is grateful to the ICMAT for its support and kind hospitality.


\section{Regular integral invariants and proof of the main theorem}\label{sec:main_result}

Consider the space $\mathfrak X_{ex}^k$, $k\in\{1,2,\cdots,\infty\}$ of \emph{exact divergence-free vector fields} on
a three-dimensional Riemannian manifold $M$. (The notation $ \mathfrak X_{ex} \equiv \mathfrak X_{ex}^\infty$ stands for
$k=\infty$.) Recall that a divergence-free field $w$ is exact if it is the curl of another vector field, or, equivalently, if $i_wd\mu$ is an exact $2$-form on $M$.
The reason to consider the space $\mathfrak X_{ex}^k$ is that the curl operator  on exact fields has a well-defined inverse ${\rm curl}^{-1}: \mathfrak X_{ex}^k\to \mathfrak X_{ex}^k$. In this context, one can define the helicity of an exact field as follows:

\begin{definition}\label{def:helicity}
The {\it helicity} is the following quadratic form on $w\in\mathfrak X_{ex}^k$:
$$
\mathcal H(w):=\int_M w\cdot {\rm curl}^{-1} w\,d\mu\,.
$$
\end{definition}

It is well known~\cite{arkh} that the helicity is invariant under the action of the group $\text{SDiff}(M)$ of (smooth, orientation-preserving) volume-preserving diffeomorphisms of the manifold $M$, i.e. the helicity is a Casimir function of the group
$\text{SDiff}(M)$.
For now we need its infinitesimal invariance with respect to the action by a Lie derivative (or a Lie bracket) $L_v: \mathfrak X_{ex} \to \mathfrak X_{ex}$, where $ w\mapsto [w,v]$ for any divergence-free vector field $v$. The fact that $\mathfrak X_{ex}$ is dense in $\mathfrak X_{ex}^k$ allows us to consider the $L_v$ action by smooth fields $v$ on $\mathfrak X_{ex}^k$; we shall use this property in what follows without further mention.


\begin{definition}[\cite{EnPeTo}]\label{def:regInt1}
Let $\mathcal F : \mathfrak X_{ex}^k\rightarrow\mathbb R$ be a $C^1$ functional. We say that $\mathcal F $ is
a \emph{regular integral invariant} if:
\begin{enumerate}
\item It is invariant under the action of the volume-preserving diffeomorphism group $\text{SDiff}(M)$, i.e., $\mathcal F (w)= \mathcal F(\Phi_*w)$ for any $\Phi\in \text{SDiff}(M)$.
\item At any point $w\in\mathfrak X_{ex}^k$, the (Fr\'{e}chet) derivative of $\mathcal F $ is an integral operator with
continuous kernel, that is,
\begin{equation}\label{eq:regInt11}
(D\mathcal F )_{w}(u)=\int_M K(w)\cdot\,u\,d\mu,
\end{equation}
for any $u\in\mathfrak X_{ex}^k$, where $ K: \mathfrak X_{ex}^k\rightarrow\mathfrak X_{ex}^k$ is a continuous map (with respect to the Whitney $C^k$ topology). Here the dot stands for the Riemannian inner product of two vector fields on $M$.
\end{enumerate}
\end{definition}
\begin{remark}
The regular integral invariant $\mathcal F :\mathfrak X_{ex}^k\rightarrow \mathbb R$ is $C^1$ (or continuously differentiable) in the Fr\'{e}chet sense, i.e., the Fr\'{e}chet derivative $D\mathcal F$ is a continuous map. Depending on how we identify the dual space $\mathfrak X_{ex}^*$ with $\mathfrak X_{ex}$, we obtain different regularities for the derivative $D\mathcal F$. Indeed, with the identification used in the context of the coadjoint action, introduced in Section~\ref{SC:main}, cf. Equation~\eqref{eq:pair}, we have that $(D\mathcal F )_{w}={\rm curl}\; K(w)$, so $D\mathcal F :\mathfrak X_{ex}^k\rightarrow\mathfrak X_{ex}^{k-1}$. On the other hand, taking the pair between
two exact fields to be the standard $L^2$ product, we have that $(D\mathcal F )_{w}=K(w)$, and hence $D\mathcal F :\mathfrak X_{ex}^k\rightarrow\mathfrak X_{ex}^{k}$. However, this distinction will not be important in what follows because we shall not work with $D\mathcal F$, but directly with the kernel~$K$.
\end{remark}
It is easy to check that the helicity $\mathcal H$ is a regular integral invariant (for any $k\geq 1$) with kernel
\begin{equation}
K(w)=2\,{\rm curl}^{-1} w\,.
\end{equation}

Note that in the case that the manifold $M$ has trivial homology $H_1(M)=0$, any divergence-free vector field is exact. Accordingly, in this case regular integral invariants on the space of smooth fields $\mathfrak X_{ex}$ and regular Casimir functions of the Lie group ${\rm SDiff}(M)$ are the same (cf. Section~\ref{sec:pfCor} for a proof).

\bigskip

\noindent {\bf Proof of the main theorem}.
The proof of Theorem~\ref{T:main} follows the strategy of~\cite{EnPeTo}, while the implementation turns out to be quite different in view of the different regularity of the vector fields.

\begin{proposition}
For a regular  invariant $\mathcal F $ with kernel $K$ on the space $\mathfrak X_{ex}^k$, $k\geq 1$ the kernel satisfies the following property: for any $w\in\mathfrak X_{ex}^k$  there exists a $C^k$ function $J_w:M\to\mathbb R$ (depending on $w$)
such that
\begin{equation}\label{eq:first_Integral1}
{\rm curl}\,K(w)\times w=\nabla J_w\,.
\end{equation}
\end{proposition}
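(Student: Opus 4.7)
The plan is to exploit the diffeomorphism invariance of $\mathcal{F}$ at the infinitesimal level and then read off the stated identity via Hodge-theoretic arguments.

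First, I would differentiate the invariance $\mathcal{F}(w)=\mathcal{F}(\Phi_{t*}w)$ along the flow $\Phi_t$ of an arbitrary smooth divergence-free vector field $v$. Since $\tfrac{d}{dt}\big|_{t=0}\Phi_{t*}w=[w,v]$, the Fréchet derivative satisfies $(D\mathcal{F})_w([w,v])=0$, so from the integral-kernel assumption,
\[
\int_M K(w)\cdot [w,v]\, d\mu = 0
\qquad\text{for every divergence-free } v.
\]
(Density of smooth fields in $\mathfrak X_{ex}^k$ lets us use smooth $v$ even when $w$ is only $C^k$.)

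Next I would translate $[w,v]$ into a differential-form expression using that both $v$ and $w$ are divergence-free. The standard computation
\[
i_{[w,v]}d\mu = -L_v(i_w d\mu) = -d(i_v i_w d\mu)
\]
together with the pointwise identity $i_v i_w d\mu = (w\times v)^\flat$ rewrites the integral, after Stokes on the closed manifold $M$, as
\[
\int_M K(w)\cdot[w,v]\, d\mu
= -\int_M K(w)^\flat \wedge d\bigl((w\times v)^\flat\bigr)
= -\int_M dK(w)^\flat \wedge (w\times v)^\flat.
\]
Using $dK(w)^\flat = i_{\mathrm{curl}\,K(w)}d\mu$ and the scalar triple product identity $\mathrm{curl}\,K(w)\cdot(w\times v) = (\mathrm{curl}\,K(w)\times w)\cdot v$, this becomes
\[
\int_M \bigl(\mathrm{curl}\,K(w)\times w\bigr)\cdot v\, d\mu = 0
\qquad\text{for every divergence-free } v.
\]

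Finally, I would invoke the Hodge decomposition: a continuous vector field that is $L^2$-orthogonal to all smooth divergence-free vector fields on the closed manifold $M$ must be a gradient. Explicitly, one can write the field as $\nabla J_w + u^\perp$ with $u^\perp$ divergence-free (solving $\Delta J_w = \mathrm{div}(\mathrm{curl}\,K(w)\times w)$), then test against $v=u^\perp$ to force $u^\perp\equiv 0$. This yields a function $J_w$ with $\mathrm{curl}\,K(w)\times w = \nabla J_w$. For the regularity statement, $K(w)\in\mathfrak X_{ex}^k$ gives $\mathrm{curl}\,K(w)\in \mathfrak X^{k-1}$, hence $\mathrm{curl}\,K(w)\times w\in \mathfrak X^{k-1}$, and one degree of integration (via elliptic regularity for the Poisson equation satisfied by $J_w$) produces a $C^k$ function, as claimed.

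The only subtle point I expect is the Hodge-theoretic step on a manifold with nontrivial $H^1(M,\mathbb R)$: harmonic 1-forms are themselves divergence-free, so testing against them is automatically allowed and forces the harmonic component of $\mathrm{curl}\,K(w)\times w$ to vanish, leaving a pure gradient. The bookkeeping of signs in the integration-by-parts and the triple product is otherwise routine.
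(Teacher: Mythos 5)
Your proposal is correct and follows essentially the same route as the paper: differentiate the invariance along the flow of an arbitrary divergence-free $v$, rewrite $\int_M K(w)\cdot[w,v]\,d\mu$ via the identity $[w,v]=\mathrm{curl}\,(v\times w)$ (equivalently, your Cartan-formula computation) to get $\int_M(\mathrm{curl}\,K(w)\times w)\cdot v\,d\mu=0$ for all divergence-free $v$, and conclude by Hodge decomposition that the field is a gradient. Your explicit treatment of the harmonic component when $H^1(M,\mathbb R)\neq 0$ fills in a detail the paper leaves implicit, but it is the same argument.
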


\begin{proof}
By differentiating $\mathcal F ((\phi_t)_*(w))=\mathcal F(w)$ at $t=0$ for a family  of diffeomorphisms $\phi_t$  we obtain after certain transformations
$$
0=\frac {d}{dt}\mathcal F((\phi_t)_*(w))|_{t=0}=\int_M K(w)\cdot [w,v]\,d\mu=-\int_M v\cdot ({\rm curl}\,K(w)\times w)\,d\mu
$$
for any  divergence-free velocity field $v:={d\phi_t}/{dt}(0)\in\mathfrak X(M)$. (One uses the relation
$[w,v]={\rm curl}\,(v\times w)$ for divergence-free vector fields.)
Due to arbitrariness of $v$ this implies that the vector field ${\rm curl}\,K(w)\times w$ is $L^2$-orthogonal to all divergence-free vector fields on $M$, and hence it is a gradient field.
\end{proof}

It turns out that for sufficiently smooth $w$ one can prove a stronger result:

\begin{proposition}\label{P:FI}
For the kernel $K$ of a regular invariant $\mathcal F $ on the space $\mathfrak X_{ex}^k$, $k\geq 4$, for any $w\in\mathfrak X_{ex}^k$
one has
$$
{\rm curl}\,K(w)\times w=0\,.
$$
\end{proposition}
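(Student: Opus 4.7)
From the preceding proposition, for every $w\in\mathfrak{X}_{ex}^k$ there is a $C^k$ function $J_w$ on $M$ with $\nabla J_w = \mathrm{curl}\,K(w)\times w$. Dot products with $w$ and with $\mathrm{curl}\,K(w)$ show that $J_w$ is a first integral of \emph{both} $w$ and $\mathrm{curl}\,K(w)$. Since $M$ is connected, the target identity $\mathrm{curl}\,K(w)\times w=0$ is equivalent to $J_w$ being constant, so the goal reduces to proving $J_w\equiv\mathrm{const}$ for every $w\in\mathfrak{X}_{ex}^k$.

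The plan has two parts. First, I would establish $J_w\equiv\mathrm{const}$ on a dense subset $\mathcal{D}\subset\mathfrak{X}_{ex}^k$ (in the Whitney $C^k$-topology) using the announced dynamical tools. Second, I would extend to every $w$ by continuity: the continuity of $K$ makes $\mathrm{curl}\,K(w)\times w$ depend continuously on $w$ as a $C^{k-1}$ vector field on $M$, so the pointwise identity $\mathrm{curl}\,K(w)\times w=0$ is closed under $C^k$-limits. The density step is where the entire difficulty lies, and I would carry it out as follows. By a standard transversality-type perturbation (divergence-free fields with only non-degenerate, and in fact hyperbolic, zeros are $C^k$-dense), I take $w\in\mathcal{D}$ possessing a hyperbolic zero $p$. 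At $p$ both sides of $\nabla J_w=\mathrm{curl}\,K(w)\times w$ vanish, so $p$ is a critical point of $J_w$. Using a local normal form for divergence-free vector fields at a hyperbolic zero, together with the division lemma announced in the introduction, I expect to conclude that $dJ_w$ vanishes identically in a neighbourhood of $p$: heuristically, the division lemma writes $dJ_w = g\cdot w^\flat$ for a $C^{k-1}$ function $g$, but $dJ_w$ already annihilates $w$, forcing $g=0$.

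To propagate the local vanishing of $dJ_w$ to all of $M$, I would invoke the Poincar\'e-Birkhoff theorem together with Sard's theorem. Applied to an area-preserving return map on a Poincar\'e section transverse to a suitable periodic orbit of $w$ (produced, for instance, by the elliptic part of the field in the normal form), Poincar\'e-Birkhoff yields families of periodic orbits with infinitely many distinct rotation numbers. Each such orbit is $w$-invariant, hence $J_w$ is constant on it; Sard's theorem---the step that forces $k\geq 4$, as it is applied both to $J_w$ itself and to compositions with section maps---shows that the regular values of $J_w$ form a set of full measure, whose level sets are smooth $w$-invariant surfaces. The family of periodic orbits produced by Poincar\'e-Birkhoff cannot be packed into the regular level sets of a non-constant $C^k$ function, and this contradiction yields $J_w\equiv\mathrm{const}$ on a neighbourhood of the orbit. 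Chaining such neighbourhoods with those given by the hyperbolic zero and using connectedness of $M$ concludes.

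The main obstacle I anticipate is precisely this global propagation, not the local analysis at $p$. A generic divergence-free field of class $C^k$ with $k\geq 4$ is \emph{not} topologically transitive (KAM produces a positive-measure invariant set foliated by tori), so one cannot argue by density of an orbit as in the $C^1$-setting of \cite{EnPeTo}; the combination of hyperbolic normal forms, the division lemma, and Poincar\'e-Birkhoff/Sard is what bridges the local-to-global gap. It is also where the hypothesis $k\geq 4$ becomes unavoidable.
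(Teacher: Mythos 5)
Your overall architecture is the right one---reduce to showing $J_w\equiv\mathrm{const}$, prove it on the dense set $\mathcal R$ of fields with hyperbolic zeros, and conclude by continuity of the kernel $K$---and you have identified the correct toolbox. But the heart of the density step is wrong in two places. First, the local analysis at a hyperbolic zero does not work: you propose to apply the division lemma to write $dJ_w=g\,w^\flat$, but Lemma~\ref{lem:division} requires the two fields to be collinear ($i_vi_wd\mu=0$), whereas $\nabla J_w=\mathrm{curl}\,K(w)\times w$ is \emph{orthogonal} to $w$, not parallel to it, and in general is not a multiple of $w$. (In the paper the division lemma enters only later, in Proposition~\ref{prop:step3}, \emph{after} Proposition~\ref{P:FI} has already established the collinearity of $\mathrm{curl}\,K(w)$ and $w$.) Moreover, a field in $\mathcal R$ need not have any zeros at all---every closed $3$-manifold has vanishing Euler characteristic---so an argument anchored at a hyperbolic zero cannot be the engine of the proof. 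In the paper the hyperbolicity of zeros plays a purely negative role: via Poincar\'e--Hopf and Thom's isotopy theorem it forces every regular level component of a $C^{k-1}$ first integral of $w_0\in\mathcal R$ to be a torus, since a higher-genus component would produce a continuous arc of zeros (Lemma~\ref{lem:reg_levelset}).

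Second, and more seriously, your global step cannot be run on the unperturbed field. The claim that the periodic orbits produced by Poincar\'e--Birkhoff ``cannot be packed into the regular level sets of a non-constant $C^k$ function'' is false: for an integrable twist flow on $\mathbb T^2\times(0,1)$ the periodic orbits at a rational rotation number fill a single invariant torus, which is a regular level set of a genuinely non-constant first integral. No contradiction can be extracted from the dynamics of $w_0$ alone, because divergence-free fields with non-constant first integrals certainly exist; the statement being proved is about the special function $J_w$ attached to the invariant kernel, not about first integrals in general. The actual mechanism is perturbative: assuming $J_{w_0}$ is non-constant, one obtains a domain $U\cong\mathbb T^2\times(0,1)$ fibred by invariant tori; one then perturbs $w_0$ inside $U$, staying in $\mathfrak X^k_{ex}$, so that the return map becomes a twist map with a resonant circle, and a further generic area-preserving perturbation yields, via Poincar\'e--Birkhoff, hyperbolic periodic saddles---hence the perturbed field $w$ admits \emph{no} trivial fibration by invariant tori in $U$ (Lemma~\ref{lem:torus}). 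On the other hand, the continuity of $K$ gives $\|J_w-J_{w_0}\|_{C^k}<C\epsilon$, so Thom's isotopy theorem forces $J_w$ to \emph{still} fibre $U$ by invariant tori of $w$. That is the contradiction. Your proposal omits this perturbation of the field itself (and the attendant verification that the perturbation is exact and divergence-free), which is where Poincar\'e--Birkhoff is actually used.
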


We shall prove this key proposition in the next section. For now we assume it is true and continue with the proof of Theorem~\ref{T:main}.

The equation ${\rm curl}\,K(w)\times w=0$ implies that there exists a function $g\in C^{k-1}(M\backslash w^{-1}(0))$ defined in the complement of the zero set of $w$, such that
$${\rm curl}\,K(w)=g\;w,\;\;\text{with}\;\; g:=\frac{w\cdot {\rm curl}\,K(w)}{|w|^2}\,.$$

Consider the set $\mathcal R$ of exact divergence-free vector fields $w\in\mathfrak X_{ex}^k $, whose zeros are hyperbolic (and hence isolated, so there are finitely many of them). We will call vector fields in $\mathcal R$ hyperbolic.
A standard transversality argument (see e.g.~\cite[Chapter 2.3]{PM}) implies that $\mathcal R$ is an open and dense set of $\mathfrak X_{ex}^k$. Take a vector field $w_0\in\mathcal R\subset \mathfrak X_{ex}^k$.

\begin{proposition}\label{prop:step3}
The  function $g_0(x)$ corresponding to a hyperbolic vector field $w_0\in\mathcal R\subset \mathfrak X_{ex}^k$
 can be extended to the whole manifold $M$ as a $C^{k-2}$ function, i.e. for any $w_0\in\mathcal R$
 there exists a function
 $g_0\in C^{k-1}(M\backslash w_0^{-1}(0))\bigcap C^{k-2}(M)$  such that ${\rm curl}\,K(w_0)=g_0(x)\,w_0$.
\end{proposition}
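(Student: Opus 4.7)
The plan is to localize to neighbourhoods of the (isolated, finitely many) hyperbolic zeros $p$ of $w_0$: away from $w_0^{-1}(0)$, the formula $g_0=(w_0\cdot u)/|w_0|^2$ with $u:={\rm curl}\,K(w_0)\in C^{k-1}$ (since $K(w_0)\in\mathfrak X^k_{ex}$) already exhibits $g_0$ as $C^{k-1}$. Fix local coordinates $(x_1,x_2,x_3)$ centred at $p$ and write $w_0(x)=Ax+O(|x|^2)$ with $A$ invertible. The first step is to extract from the pointwise relation $u\times w_0=0$ (Proposition~\ref{P:FI}) the fact that $u(p)=0$: otherwise the leading-order identity $u(p)\times Ax\equiv 0$ for all $x$ would force $u(p)$ to be parallel to every vector in $\mathbb R^3$, a contradiction.

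\medskip

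\noindent The central idea is to convert the problem into one of coordinate division by using $w_0$ itself as a new chart. Viewed in local coordinates as a $C^k$ map $U\to\mathbb R^3$, the map $\phi\colon x\mapsto w_0(x)$ satisfies $\phi(0)=0$ and $D\phi(0)=A$ invertible, so the inverse function theorem provides a $C^k$-diffeomorphism onto a neighbourhood $V$ of $0$. Writing $\tilde u(y):=u(\phi^{-1}(y))$ and $\tilde g(y):=g_0(\phi^{-1}(y))$, and using the tautology $(w_0)_i(\phi^{-1}(y))=y_i$, the identity $u=g_0\,w_0$ transforms on $V\setminus\{0\}$ into the componentwise equation
$$
\tilde u_i(y)=\tilde g(y)\,y_i \qquad (i=1,2,3),
$$
where $\tilde u\in C^{k-1}$ as the composition of $u\in C^{k-1}$ with $\phi^{-1}\in C^k$.

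\medskip

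\noindent After this reduction I would apply Hadamard's lemma. From the displayed equation and from $\tilde u(0)=0$, each $\tilde u_i$ vanishes on the hyperplane $\{y_i=0\}$, hence admits a factorization $\tilde u_i(y)=y_i\,h_i(y)$ with $h_i\in C^{k-2}$. On $\{y_i\neq 0\}$ one has $\tilde g=h_i$, so the three continuous candidates $h_1,h_2,h_3$ agree on the dense open set $\{y_1 y_2 y_3\neq 0\}$ and therefore define a single $C^{k-2}$ extension of $\tilde g$ to $V$. Pulling back via $\phi$ yields the desired $C^{k-2}$ extension of $g_0$ near $p$; doing this at each of the finitely many hyperbolic zeros completes the proof.

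\medskip

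\noindent The main obstacle I anticipate is the careful bookkeeping of regularities, and in particular confirming that the $C^{k-2}$ loss is unavoidable: one derivative is lost in passing from $K(w_0)\in C^k$ to $u={\rm curl}\,K(w_0)\in C^{k-1}$, and another is lost in Hadamard division. Conceptually, the essential role of hyperbolicity is that it lets the components of $w_0$ serve as a coordinate chart near $p$, reducing a genuinely delicate division of a $C^{k-1}$ vector field by a vanishing vector field to the routine division of a scalar function by one of the coordinates.
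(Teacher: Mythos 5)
Your proposal is correct and follows essentially the same route as the paper: the paper packages the local step as Lemma~\ref{lem:division}, whose proof is exactly your argument --- use the components of $w_0$ as a coordinate chart near each hyperbolic zero (legitimate since the linearization is invertible), deduce that ${\rm curl}\,K(w_0)$ vanishes there, and apply Hadamard's lemma to divide, losing one derivative. The only cosmetic difference is that you derive the vanishing of $u$ at the zero from the leading-order Taylor expansion, while the paper invokes a flow-box argument; both are fine.
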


Indeed, the equation ${\rm curl}\,K(w_0)\times w_0=0$ can be equivalently written as $i_{{\rm curl}\,K(w_0)} i_{w_0}d\mu=0$.
This implies collinearity of the vector fields ${\rm curl}\,K(w_0)$ and ${w_0}$, and hence
the existence of the required $C^{k-1}$ function $g_0$ away from the zero set of $w_0$. Furthermore,
the set $w_0^{-1}(0)$ consists of finitely many points, which are hyperbolic zeros of the field $w_0$. Then Proposition~\ref{prop:step3} immediately follows from the following version of  Hadamard's division lemma, applied in a neighborhood of each zero of $w_0$, which allows one to extend the function $g_0$ to a $C^{k-2}$-function on the entire $M$. The statement
of this division lemma is local and can be formulated in local coordinates $(x_i)$
in a neighborhood of the origin of $\mathbb R^n$.

\begin{lemma}\label{lem:division}
Let $v=\sum \alpha_i\partial_{x_i}$ and $w=\sum\beta_i\partial_{x_i}$ be two $C^l$ vector fields defined in a neighborhood $V$ of $0\in\mathbb R^n$, and $d\mu$ a volume form on $\mathbb R^n$. Assume that $0$ is a hyperbolic zero of $w$. Then, if $i_vi_wd\mu=0$ in $V$, there exists a function $h\in C^{l-1}(V)$, such that
$\alpha_i=h(x)\beta_i$, for all $i=1,...,n$.
\end{lemma}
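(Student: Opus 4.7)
The plan is to exploit the invertibility of $Dw(0)$ to use the components of $w$ as a new coordinate system, after which the collinearity of $v$ with $w$ becomes collinearity with the position vector, and then to apply a componentwise Hadamard-type division lemma.

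First I would rewrite the hypothesis: in any local coordinates, $i_v i_w d\mu = 0$ amounts to $\alpha_i \beta_j - \alpha_j \beta_i \equiv 0$ for all $i,j$, i.e., $v$ and $w$ are pointwise collinear. Since $0$ is a hyperbolic zero of $w$, the matrix $Dw(0)$ is invertible, so the $C^l$ map $x \mapsto w(x) = (\beta_1(x), \ldots, \beta_n(x))$ is a $C^l$-diffeomorphism between a (possibly shrunk) neighborhood $V$ of $0$ and a neighborhood $W = w(V)$ of $0$ in $\mathbb{R}^n$. Writing $\Phi := w^{-1}$ and $F_i(z) := \alpha_i(\Phi(z))$, I get a $C^l$ map $F:W\to\mathbb{R}^n$, and since $z_i = \beta_i(\Phi(z))$ the collinearity condition becomes
\begin{equation*}
F_i(z)\, z_j = F_j(z)\, z_i \qquad \text{for all } z \in W.
\end{equation*}
The problem thus reduces to finding $\rho \in C^{l-1}(W)$ with $F(z) = \rho(z)\, z$, after which $h(x) := \rho(w(x))$ (a composition of a $C^{l-1}$ and a $C^l$ map) provides the desired factor.

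Next I would show that each $F_j$ vanishes on the coordinate hyperplane $\{z_j=0\}$. Indeed, if $z_j=0$ and $z_k\neq 0$ for some $k\neq j$, then $F_j(z)\,z_k = F_k(z)\,z_j = 0$ forces $F_j(z)=0$; since $n\ge 2$, this subset is dense in $\{z_j=0\}$ and continuity of $F_j$ propagates the vanishing to the whole hyperplane (in particular, $F(0)=0$, so $v(0)=0$). The classical Hadamard division lemma then produces $G_j \in C^{l-1}(W)$ with $F_j(z) = z_j\, G_j(z)$, via the explicit formula $G_j(z) = \int_0^1 (\partial_j F_j)(z_1,\ldots,t z_j,\ldots,z_n)\, dt$.

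Finally, substituting $F_j = z_j G_j$ into $F_i z_j = F_j z_i$ yields $z_i z_j (G_i - G_j) = 0$, so $G_i \equiv G_j$ on the dense open set $\{z_i z_j \neq 0\}$ and hence on all of $W$ by continuity. Calling this common function $\rho \in C^{l-1}(W)$, we have $F_i = z_i \rho$, which pulls back to $\alpha_i = h\beta_i$ with $h = \rho \circ w \in C^{l-1}(V)$. The only step where regularity is lost is the single application of the Hadamard division lemma, which I expect to be the main technical point but is entirely standard; hyperbolicity enters only through the invertibility of $Dw(0)$, which is precisely what allows $w$ itself to be used as a new coordinate chart and thereby reduces the problem to division by coordinate functions.
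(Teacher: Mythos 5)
Your proof is correct and follows essentially the same route as the paper: hyperbolicity makes $w$ itself a $C^l$ coordinate chart, reducing the problem to division by coordinate functions, which is handled by the classical Hadamard lemma with the integral formula for the quotient (costing one derivative), after which collinearity forces the componentwise quotients to agree. Your justification that each $F_j$ vanishes on the hyperplane $\{z_j=0\}$ via the relation $F_j z_k = F_k z_j$ and a density argument is in fact slightly more explicit than the paper's corresponding step.
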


\begin{proof}
This is a finite-smoothness version of the theorem in~\cite{Mou76} on zeros of differential forms, which is also a version of
Hadamard's lemma. Due to the assumption $i_vi_wd\mu=0$, these two fields are collinear outside of the origin and
 there exists a function $h\in C^l(V\setminus \{0\})$, such that $\alpha_i=h(x)\beta_i$, for all $i=1,...,n$ and $x\not=0$.

Since the field $w$ has a hyperbolic singularity at $0$, one can use the functions $\beta_i$ as local coordinates $\tilde x_i$ near the origin; any function in the new coordinate system will be denoted with a ``tilde''. Since the collinearity assumption implies that $v$ vanishes at the origin (otherwise the zero of $w$ cannot be hyperbolic, this follows from a simple flow box argument), then $\tilde \alpha_i =0$ for $\tilde x_i=0$. One so has $\tilde \alpha_i =\tilde h_i(\tilde x) \tilde x_i$ by the classical Hadamard lemma, while the collinearity condition implies that $\tilde h_i=\tilde h_j=\tilde h$ in $V\setminus \{0\}$.
The smoothness of $h$ in the whole of $V$
is less by 1 than that of $\tilde \alpha_i$, which is shown by a standard argument:
in a convex neighborhood $\tilde V\subset V$ of the origin (in new coordinates) we can express
the components $\tilde \alpha_i =\tilde h(\tilde x) \tilde x_i$ of the vector field $v$ as follows:
$$
\widetilde{\alpha}_i(\tilde x)=\int_0^1 \frac{d}{dt}\widetilde{\alpha}_i(\tilde x_1,\cdots,t\tilde x_i,\cdots,\tilde x_n)\,dt
=\left(\int_0^1 \partial_i\widetilde{\alpha}_i(\tilde x_1,\cdots,t\tilde x_i,\cdots,\tilde x_n)\,dt\right)\tilde x_i
= \tilde {h}(\tilde x )\, \tilde x_i\,,
$$
where $\tilde{h}(\tilde x ):= \int_0^1 \partial_i\widetilde{\alpha}_i(\tilde x_1,\cdots,t\tilde x_i,\cdots,\tilde x_n)\,dt$. Since
$\tilde \alpha_i\in C^{l}(\tilde V)$, one obtains that $\tilde h\in C^{l-1}(\tilde V)$ and therefore $h=\tilde h \circ \beta\in C^{l-1}( V)$, where $\beta=(\beta_1,\beta_2,\cdots,\beta_n)$.
\end{proof}

Now, according to Proposition~\ref{prop:step3}, we have the collinearity of the fields ${\rm curl}\,K(w_0)$ and $w_0$. This can be used to show that the derivatives of the functional $\mathcal F $ and the helicity are proportional:

\begin{proposition}\label{prop:prop}
For a regular invariant functional $\mathcal F $ on $\mathfrak X^k_{ex}$, there exists a continuous functional $\mathcal C:\mathfrak X^k_{ex}\backslash\{0\}\to \mathbb R$ such that
$$(D\mathcal F )_w=\mathcal C(w)\,(D\mathcal H)_w\,,$$
for all $w\in\mathfrak X^k_{ex}\backslash\{0\}$.
\end{proposition}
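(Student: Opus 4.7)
The plan is to establish the pointwise identity $K(w) = 2\mathcal{C}(w)\,\text{curl}^{-1}w$ first on the open dense set $\mathcal{R}\subset\mathfrak{X}^k_{ex}$ of hyperbolic fields, and then extend by continuity to all of $\mathfrak{X}^k_{ex}\setminus\{0\}$. For $w\in\mathcal{R}$, Proposition~\ref{prop:step3} provides $g_w\in C^{k-2}(M)$ with $\text{curl}\,K(w) = g_w(x)\,w$. Taking divergence of both sides (both sides are divergence-free, since $K(w),\,w\in\mathfrak{X}^k_{ex}$) yields $\nabla g_w\cdot w = 0$, so $g_w$ is a first integral of the flow of $w$.

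The central step is to show that $g_w$ is constant on $M$. Continuity of $g_w$ and convergence of orbits within the stable and unstable manifolds $W^s(p),\,W^u(p)$ of each hyperbolic zero $p$ of $w$ force $g_w\equiv g_w(p)$ on $W^s(p)\cup W^u(p)$. If $g_w$ were non-constant, then since $g_w\in C^{k-1}(M\setminus w^{-1}(0))$ and $k-1\geq 3$ for $k\geq 4$, Sard's theorem produces a regular value $c$ whose level set $\Sigma = g_w^{-1}(c)$ is a smooth compact 2-submanifold of $M\setminus w^{-1}(0)$ on which $w$ is tangent and nonvanishing; by Poincar\'{e}-Hopf each component of $\Sigma$ is orientable with $\chi=0$, hence a 2-torus. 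Analyzing the area-preserving first-return map on a transversal to such an invariant torus via the Poincar\'{e}-Birkhoff theorem, together with the local normal form of $w$ at its hyperbolic zeros (where the level structure of $g_w$ has a constrained hyperbolic-type shape dictated by the eigenvalues summing to zero), yields a contradiction, so $g_w$ must be constant.

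Once $g_w$ is constant on $\mathcal{R}$, set $\mathcal{C}(w):=g_w/2$; then $\text{curl}\,K(w) = 2\mathcal{C}(w)\,w = \text{curl}\bigl(2\mathcal{C}(w)\text{curl}^{-1}w\bigr)$, and the injectivity of $\text{curl}$ on $\mathfrak{X}^k_{ex}$ (by Hodge theory: exact divergence-free fields have no harmonic component) yields $K(w) = 2\mathcal{C}(w)\,\text{curl}^{-1}w$ on $\mathcal{R}$. To extend $\mathcal{C}$ to all of $\mathfrak{X}^k_{ex}\setminus\{0\}$, use the continuous formula
$$
\mathcal{C}(w) = \frac{\int_M K(w)\cdot\text{curl}^{-1}w\,d\mu}{2\int_M \left|\text{curl}^{-1}w\right|^2\,d\mu},
$$
whose denominator does not vanish for $w\neq 0$ (as $\text{curl}^{-1}$ is injective on $\mathfrak{X}^k_{ex}$) and which is continuous in the Whitney $C^k$ topology by continuity of $K$ and $\text{curl}^{-1}$. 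Since this formula agrees with $g_w/2$ on the dense set $\mathcal{R}$, the identity $K(w) = 2\mathcal{C}(w)\,\text{curl}^{-1}w$ extends by continuity to every $w\in\mathfrak{X}^k_{ex}\setminus\{0\}$, giving $(D\mathcal{F})_w(u) = \int_M K(w)\cdot u\,d\mu = \mathcal{C}(w)\int_M 2\text{curl}^{-1}w\cdot u\,d\mu = \mathcal{C}(w)(D\mathcal{H})_w(u)$ for all $u\in\mathfrak{X}^k_{ex}$. The main obstacle is the second step: establishing constancy of the first integral $g_w$, which is where the hypothesis $k\geq 4$ enters essentially through Sard's theorem and where the dynamical tools (normal forms and Poincar\'{e}-Birkhoff) are brought to bear.
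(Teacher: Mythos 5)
Your skeleton matches the paper's: reduce to $\mathrm{curl}\,K(w)=g_w\,w$ on the dense set $\mathcal{R}$, show that the first integral $g_w$ is constant, invert $\mathrm{curl}$ on exact fields, and extend by continuity (your explicit ratio formula for $\mathcal{C}(w)$ is a clean way to handle that last step, and the reduction to invariant tori via Sard and Poincar\'e--Hopf is fine). But the central step --- constancy of $g_w$ --- has a genuine gap. You assert that for a non-constant $g_w$ the invariant tori, ``analyzed via the Poincar\'e--Birkhoff theorem together with the local normal form at hyperbolic zeros,'' yield a contradiction. They do not, for a single fixed field: there are plenty of fields in $\mathcal{R}$ admitting non-constant first integrals whose regular level components are invariant tori, and nothing about the return map on one such torus is self-contradictory. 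Poincar\'e--Birkhoff is a statement about perturbations of area-preserving twist maps, and the argument must use it that way.

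The missing mechanism is a perturbation argument that leans essentially on the continuity of the kernel map $K:\mathfrak{X}^k_{ex}\to\mathfrak{X}^k_{ex}$, which your central step never invokes (you use continuity of $K$ only later, to extend $\mathcal{C}$ off $\mathcal{R}$). Concretely: put $w_0$ into Sternberg normal form on a tube $U\cong\mathbb{T}^2\times(0,1)$ trivially fibred by the invariant tori; perturb inside $\mathfrak{X}^k_{ex}$ so that the rotation number varies nondegenerately across the tori (twist condition); then apply Poincar\'e--Birkhoff to a further generic perturbation near a resonant torus to obtain a field $w$, arbitrarily $C^k$-close to $w_0$, with a hyperbolic periodic orbit in $U$, hence admitting no trivial fibration of $U$ by invariant tori (this is Lemma~\ref{lem:torus}). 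On the other hand, continuity of $K$ forces the corresponding function for $w$ to be $C^k$-close to $g_{w_0}$ (after normalization), so Thom's isotopy theorem says the perturbed field must still fibre $U$ by invariant tori of a nearby first integral --- that is the contradiction. Without closing this loop (perturb to destroy the tori, then use continuity of $K$ plus Thom isotopy to see they must persist), the claim that $g_w$ is constant is unsupported; your opening observation that $g_w$ is locally constant on the stable and unstable manifolds of the zeros is true but does not substitute for it.
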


\begin{proof}
Since $0={\rm div}\,{\rm curl}\,K(w_0)=\nabla g_0\cdot w_0$, we have that the function $g_0$ is a $C^{k-2}$ first integral of the vector field $w_0\in\mathcal R$ (more precisely, $g_0\in C^{k-1}(M\backslash w_0^{-1}(0))\bigcap C^{k-2}(M)$). The zero set of $w_0$ consists of finitely many points that we denote by $\{p_1,p_2,\dots,p_N\}\subset M$. Take the values $g_0(p_i)=c_i$, and a point $p\in M$ with $g_0(p)\neq c_i$ for all $i$. Since $g_0$ is $C^{k-1}$ in the complement of the finite set $\{p_1,p_2,\dots,p_N\}$, Sard's theorem implies that one can safely assume that $g_0(p)$ is a regular value. It then follows from Lemma~\ref{lem:reg_levelset} (see Section~\ref{S:auxiliar}), Thom's isotopy theorem and the compactness of the manifold that there is a domain $U$ containing $p$ which is trivially fibred by components of the level sets of $g_0$, that are invariant tori of $w_0$.

Now using Lemma~\ref{lem:torus} in Section~\ref{S:auxiliar}, and arguing exactly as in the proof of Proposition~\ref{P:FI}, we conclude that $g_0$ must be a constant for any $w_0\in\mathcal R$, i.e., one has that ${\rm curl}\,K(w_0)=2\,\mathcal C({w_0})\,w_0$ for all $w_0\in\mathcal R$, where $\mathcal C({w_0})$ is a constant on $M$ that only depends on the field $w_0$. The continuity of the kernel $K$ (which implies the continuity of the function $g$ on $\mathfrak X^{k}_{ex}$), and the fact that $g$ is a constant on $M$ (that depends on $w$) for a dense subset $\mathcal R$ of $\mathfrak X^{k}_{ex}$, imply that $g$ must also be a constant (depending on $w$) for all $w\in\mathfrak X^k_{ex}\backslash\{0\}$.

Summarizing, we have proved that there exists a continuous functional $C:\mathfrak X^k_{ex}\backslash\{0\}\to \mathbb R$ such that the derivative of $\mathcal F $ reads as
$$
(D\mathcal F )_w(u)=\int_MK(w)\cdot u\,d\mu=\int_M 2\,\mathcal C(w)\,\text{curl}^{-1}w\cdot u\,d\mu
$$
for any $u\in\mathfrak X_{ex}^k$ and $w\in\mathfrak X^k_{ex}\backslash\{0\}$. Noticing that the derivative of the helicity is given by $(D\mathcal H)_w(u)=2\int_M{\rm curl}^{-1}w\cdot u\,d\mu$, we obtain that
$$(D\mathcal F )_w=\mathcal C(w)\,(D\mathcal H)_w\,,$$
for all $w\in\mathfrak X^k_{ex}\backslash\{0\}$.
\end{proof}

Now, since the differentials of the functional $\mathcal F $ and the helicity $\mathcal H$ are proportional by Proposition~\ref{prop:prop}
at each point of the space $\mathfrak X^k_{ex}\backslash\{0\}$, the main theorem will follow from the path-connectedness of the level sets of $\mathcal H$, which is established in the following proposition. This result was first proved in~\cite{EnPeTo}. For the sake of completeness, here we give a simpler and more transparent proof.

\begin{proposition}[\cite{EnPeTo}]\label{lem:pathconn}
The level sets of the  helicity $\mathcal H^{-1}(c)$ are path-connected subsets of $\mathfrak X^k_{ex}\setminus \{0\}$.
\end{proposition}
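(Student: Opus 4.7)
\medskip

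\noindent\textbf{Proof plan.} My plan is to connect two arbitrary fields $w_0,w_1\in\mathcal H^{-1}(c)\setminus\{0\}$ by concatenating two explicit ``rotation'' arcs through a carefully chosen pivot field $w_2$. Let $B(u,v):=\int_M u\cdot\mathrm{curl}^{-1}v\,d\mu$ denote the polarisation of the helicity, so $\mathcal H(u)=B(u,u)$.

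The key building block is the observation that if $u_0,u_1\in\mathfrak X^k_{ex}$ are linearly independent with $\mathcal H(u_0)=\mathcal H(u_1)=c$ and $B(u_0,u_1)=0$, then
\[
\theta\longmapsto \cos\theta\, u_0 + \sin\theta\, u_1,\qquad \theta\in[0,\pi/2],
\]
is a continuous path from $u_0$ to $u_1$ inside $\mathcal H^{-1}(c)\setminus\{0\}$: by bilinearity the helicity along it equals $c\cos^2\theta+c\sin^2\theta=c$, and the curve never vanishes because $(\cos\theta,\sin\theta)\neq(0,0)$ and the two endpoints are linearly independent.

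To apply this building block, I would produce a pivot $w_2\in\mathfrak X^k_{ex}$ satisfying $B(w_0,w_2)=B(w_1,w_2)=0$, $\mathcal H(w_2)=c$, and linear independence from both $w_0$ and $w_1$. The first two conditions cut out a closed subspace $V\subset\mathfrak X^k_{ex}$ of codimension at most two, hence still infinite-dimensional. The crucial analytic input is that the helicity is \emph{strongly indefinite} on $\mathfrak X^k_{ex}$: linked vortex-tube configurations of either orientation, combined with scaling, realise values of $\mathcal H$ of either sign and of arbitrary magnitude, and this indefiniteness persists on every finite-codimension subspace. Hence $V$ contains fields of both positive and negative helicity. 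For $c\neq 0$, rescaling any element of $V$ of helicity of the same sign as $c$ produces $w_2$; linear independence is automatic, since any $v=\alpha w_0+\beta w_1\in V\cap\mathrm{span}(w_0,w_1)$ satisfies $\mathcal H(v)=\alpha B(v,w_0)+\beta B(v,w_1)=0\neq c$. For $c=0$, I would pick $v^{\pm}\in V$ with $\mathcal H(v^+)>0>\mathcal H(v^-)$ lying outside $\mathrm{span}(w_0,w_1)$ (an infinite-dimensional set of choices) and apply the intermediate value theorem along the segment $(1-s)v^-+sv^+$ to obtain a nonzero $w_2\in V$ of zero helicity outside $\mathrm{span}(w_0,w_1)$.

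Concatenating the rotation arc from $w_0$ to $w_2$ with the arc from $w_2$ to $w_1$ then yields the desired continuous path in $\mathcal H^{-1}(c)\setminus\{0\}$. The step I expect to be the main obstacle is the strong-indefiniteness input: one must exhibit, inside the finite-codimension subspace $V$ which depends on $w_0$ and $w_1$, test fields realising both signs of helicity. This is precisely where the infinite-dimensional nature of $\mathfrak X^k_{ex}$ is genuinely used, for instance through localised vortex constructions supported away from any fixed compact set, tuned to be $B$-orthogonal to the two prescribed fields $w_0$ and $w_1$.
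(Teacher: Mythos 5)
Your argument is essentially the paper's proof: your rotation arc $\cos\theta\,u_0+\sin\theta\,u_1$ is the paper's path $t\,w_1+\sqrt{(1-t^2)c}\,\beta$ after the substitution $t=\cos\theta$, and your pivot $w_2$ is the paper's $\bar\beta$, chosen in the same codimension-$\le 2$ subspace that is $B$-orthogonal to both endpoints. The one input you flag as the main obstacle --- that $\mathcal H$ stays indefinite on every finite-codimension subspace --- is settled in the paper not by vortex-tube constructions but by the spectral fact that $\mathrm{curl}$ on $\mathfrak X^k_{ex}$ has infinitely many positive and infinitely many negative eigenvalues, so the positive and negative subspaces of the quadratic form $\mathcal H$ are both infinite-dimensional and therefore meet any finite-codimension subspace nontrivially (mere sign-indefiniteness on the whole space would not suffice for this step).
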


\begin{proof}
Let $w_1$ and $w_2$ be two exact divergence-free vector fields with the same helicity, which we first assume to be nonzero, i.e.
$$
\mathcal H(w_1)=\mathcal H(w_2)=c\not=0\,.
$$
In order to prove the connectedness of the $c$-level,
we introduce an auxiliary vector field $\bar\beta\in \mathfrak X^k_{ex}$ with the same helicity $c$, which can be connected with
each $w_0$ and $w_1$.  The only ingredient we need in the proof is the property that the curl operator acting on the space
$\mathfrak X^k_{ex}$ of exact fields has infinitely many positive and negative eigenvalues, which implies that the positive and negative subspaces of the helicity quadratic form $\mathcal H(w)$ on $\mathfrak X^k_{ex}$ are infinite-dimensional.

Assume that $c>0$. Consider the subspace $\mathcal S\subset \mathfrak X^k_{ex}$ of vector fields orthogonal to
the two vector fields $w_1,w_2$ with respect to the helicity quadratic form $\mathcal H$, that is
$$
\mathcal S:=\{u\in \mathfrak X^k_{ex}(M)~:~ \int_M u\cdot \text{curl}^{-1} w_1\,d\mu=\int_M u\cdot \text{curl}^{-1} w_2\,d\mu=0\}\,.
$$
This space has codimension $\le 2$ and hence the restriction of the helicity $\mathcal H$ to this subspace is still sign-indefinite. Hence, one can choose a vector field $\beta\in \mathcal S$ such that $\mathcal H(\beta)=1$ (for $c<0$
one needs to choose  $\mathcal H(\beta)=-1$).

Now define a family of vector fields $w_t:= tw_1+f(t)\beta$ for $t\in [0,1]$, and choose an appropriate function $f(t)$
so that the condition $\mathcal H(w_t)=c$ holds for all $t$. Namely,
$$
\mathcal H(w_t)=\mathcal H\Big(tw_1+f(t)\beta\Big)
=t^2\mathcal H(w_1)+f(t)^2\mathcal H(\beta)= t^2 c+f(t)^2\,,
$$
where we have used that $\mathcal H(w_1)=c$, $\int_M \beta\cdot \text{curl}^{-1} w_1d\mu=\int_M w_1\cdot \text{curl}^{-1}\beta\,d\mu=0$ and $\mathcal H(\beta)=1$.
Then, taking $f(t):=\sqrt{(1-t^2)c}$ for $t\in [0,1]$ we obtain a continuous family $w_t$ of fields in $\mathfrak X^k_{ex}$
that have constant helicity $c$ and connect $\bar\beta:=\sqrt c\beta$ and $w_1$. In the same way one can connect the fields  $w_2$ and $\bar\beta$  for $t\in [-1,0]$. The connecting path can be smoothened out by adjusting this construction. This proves the path-connectedness of the levels sets of the helicity for $c>0$; a similar construction works for $c<0$.

The case of $c=0$ is analogous; take a (non-zero) field $\beta$ with $\mathcal H(\beta)=0$ and the family  $w_t=tw_1 +(1-t)\beta$.
Then $\mathcal H(w_t)=0$ and this family  provides the connectedness of the zero level set of the helicity, as required.
\end{proof}

We are now ready to finish the proof of Theorem~\ref{T:main}.

\begin{proof}
First, we recall that we showed in Proposition~\ref{prop:prop} that the derivative of the functional $\mathcal F $ satisfies
\begin{equation}\label{eq_conn}
(D\mathcal F )_w=\mathcal C(w)\,(D \mathcal H)_w
\end{equation}
for all $w\in\mathfrak X^k_{ex}\backslash\{0\}$ and some continuous functional $\mathcal C(w)$. Let us take any two exact vector fields on the level set $\{\mathcal H=c\}$ of the helicity, and connect them by
a continuous path $w_t\in \mathfrak  X^k_{ex}\backslash\{0\}$ (cf. Proposition~\ref{lem:pathconn}) with constant helicity, i.e. $\mathcal H(w_t)=c$. The derivative of $\mathcal F $ along this path is given by
$$
(D\mathcal F )_{w_t}(\dot w_t)
=\mathcal C({w_t})(D\mathcal H)_{w_t}(\dot w_t)=0\,,
$$
where $\dot w_t$ is the tangent vector along the path for almost all $t$ (actually, one can safely assume that the path is smooth in $t$). This implies that $\mathcal F $ is also constant on each level set of $\mathcal H$. Accordingly, there exists a function $f : \mathbb R\rightarrow\mathbb R$
which assigns a value of $\mathcal F $ to each value of the helicity,
i.e., $\mathcal F (w) =f(\mathcal H(w))$ for all $w\in \mathfrak X^k_{ex}\backslash\{0\}$.
To include the case $w=0$, we observe that $\mathcal F $ is
constant on the level set $\mathcal H^{-1}(0)\backslash\{0\}$, so the continuity of the functional $\mathcal F $ in $\mathfrak  X^k_{ex}$ implies that it takes the same constant
value on the whole level set $\mathcal H^{-1}(0)$, and hence the property $\mathcal F (w) =f(\mathcal H(w))$ holds for all $w\in \mathfrak X^k_{ex}$. Additionally, $f$ is of class $C^1$ since $\mathcal F $ itself is a $C^1$ functional. This completes the proof of Theorem~\ref{T:main}.
\end{proof}

\section{Properties of the invariant kernel}\label{S:auxiliar}

The goal of this section is to prove Proposition~\ref{P:FI}. First, notice that $0=w\cdot({\rm curl}\,K(w)\times w)=w\cdot\nabla J$, and hence $J$ is a first integral of $w$. By the continuity of the kernel $K$, Proposition~\ref{P:FI} follows if we show that $J$ is a constant (a trivial first integral) on the manifold $M$ for a residual (and hence dense) set of vector fields $w\in \mathfrak X_{ex}^k$.

To this end, consider the set $\mathcal R$ of exact divergence-free vector fields $w\in\mathfrak X_{ex}^k $, whose zeros are hyperbolic (and hence isolated). Each vector field $w\in \mathcal R$ has finitely many zeros. This set was already introduced after the statement of Proposition~\ref{P:FI}, where it was pointed out that $\mathcal R$ is an open and dense set of $\mathfrak X_{ex}^k$. The proof of Proposition~\ref{P:FI} makes use of the following instrumental lemma.

\begin{lemma}\label{lem:reg_levelset}
If $w\in\mathcal R$ and $P$ is a first integral of $w$ of class $C^{k-1}$, $k\geq 4$, then any component of a regular level set of $P$ is diffeomorphic to the torus $\mathbb T^2$.
\end{lemma}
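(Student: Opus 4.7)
\textbf{Proof proposal for Lemma~\ref{lem:reg_levelset}.}

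The plan is to show that on a regular level set $\Sigma$ of $P$, the restriction $w|_\Sigma$ is nowhere vanishing; then the Poincar\'e-Hopf theorem, combined with orientability, will force each connected component of $\Sigma$ to be the $2$-torus. Throughout, fix a regular value $c$ of $P$, so $\Sigma := P^{-1}(c)$ is a $C^{k-1}$ compact embedded surface in $M$ (compactness follows from $M$ being closed), and let $\Sigma_0$ be one of its connected components.

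The key step, and the main (modest) obstacle, is to rule out the possibility that a zero of $w$ lies on $\Sigma$. Suppose $p\in\Sigma$ with $w(p)=0$. Since $w\in\mathcal R$, the zero $p$ is hyperbolic, so the linearization $Dw(p)$ has no zero eigenvalue and is in particular invertible. Differentiating the first-integral identity $w(P)\equiv 0$ at $p$ (a Cartesian-coordinate computation in a chart centered at $p$ suffices) gives
\begin{equation*}
\bigl(Dw(p)\bigr)^{\!T}\,\nabla P(p)\;=\;0,
\end{equation*}
because the Hessian term picks up a factor of $w(p)=0$. Invertibility of $Dw(p)$ then forces $\nabla P(p)=0$, contradicting the regularity of the value $c$. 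Hence $w|_{\Sigma_0}$ is a non-vanishing $C^{k-1}$ vector field tangent to $\Sigma_0$ (tangency follows from $L_w P=0$, and the regularity $k-1\ge 3$ is more than enough for what follows).

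Next I would observe that $\Sigma_0$ is orientable: since $M$ carries the volume form $d\mu$, it is orientable, and $\Sigma_0$ is two-sided in $M$ because its normal bundle is trivialized by the nowhere zero vector field $\nabla P$ along $\Sigma_0$. Thus $\Sigma_0$ is a closed orientable $C^{k-1}$ surface admitting a nowhere vanishing continuous tangent vector field. By the Poincar\'e-Hopf theorem, $\chi(\Sigma_0)=0$, and the classification of closed orientable surfaces then identifies $\Sigma_0$ with $\mathbb T^2$, completing the proof.

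I do not expect any serious technical difficulty; the one place where the hypotheses of the lemma are really used is the hyperbolicity-based exclusion of zeros of $w$ from regular level sets of $P$, which is precisely what the assumption $w\in\mathcal R$ is designed to deliver. The regularity $k\ge 4$ is only used implicitly to ensure $w|_{\Sigma_0}$ is smooth enough (here $C^{k-1}\supset C^3$) to apply the Poincar\'e-Hopf theorem without fuss.
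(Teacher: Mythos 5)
Your proof is correct, but the mechanism by which you exclude zeros of $w$ from the regular level set is genuinely different from the paper's. You argue locally: differentiating the identity $dP(w)\equiv 0$ at a hypothetical zero $p\in\Sigma_0$ kills the Hessian term and yields $\bigl(Dw(p)\bigr)^{T}\nabla P(p)=0$, and since hyperbolicity makes $Dw(p)$ invertible you conclude $dP(p)=0$, contradicting regularity of the value. This computation is valid (here $P\in C^{k-1}\subset C^{2}$ and $w\in C^{k}$, so the product rule applies), and it isolates the cleanest consequence of hyperbolicity; in fact you only need that $Dw(p)$ has no zero eigenvalue. The paper instead argues globally and by contradiction: if a component had genus $\neq 1$, Poincar\'e--Hopf would force a zero of $w$ on it, and then Thom's isotopy theorem applied to the nearby level sets $P^{-1}(c')$ (which are automatically regular for $c'$ close to $c$, by compactness) would produce a zero on each surface of a one-parameter family, hence infinitely many zeros of $w$, contradicting the finiteness of the zero set of a field in $\mathcal R$. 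The paper's route uses only that the zeros are isolated, not the invertibility of the linearization, but pays for this with the isotopy machinery; yours is shorter and more elementary. Both arguments then finish identically via Poincar\'e--Hopf and orientability (two-sidedness from $\nabla P$). One small correction to your closing remark: within this lemma the hypothesis $k\geq 4$ is not really about the smoothness of $w|_{\Sigma_0}$, since Poincar\'e--Hopf needs only continuity; in the paper it is tied to Sard's theorem, which for a $C^{k-1}$ function on a $3$-manifold requires $k-1\geq 3$ and is what guarantees that regular values exist in the surrounding argument.
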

\begin{proof}
By the assumption $k\geq 4$, Sard's theorem implies that most of the values of the first integral $P$ are regular. Let $c\in\mathbb R$ be a regular value, then a connected component $\Sigma_c$ of the level set $P^{-1}(c)$ is an oriented compact surface in $M$. Assume that $\Sigma_c$ is diffeomorphic to a surface of genus $\nu\neq 1$, i.e., the surface is not a torus. The Poincar\'{e}-Hopf theorem easily implies that there exists a point $p_c\in \Sigma_c$ such that $w(p_c)=0$ (and the index of $w$ at $p_c$ is nonzero). It follows from the compactness of the manifold that any value $c'\in(c-\delta,c+\delta)$, $\delta$ small enough, is regular as well. Moreover, Thom's isotopy theorem~\cite{AR67} implies that the level set $P^{-1}(c')$ is isotopic to $P^{-1}(c)$, and therefore there is a neighborhood of $\Sigma_{c}$ that is saturated by components $\Sigma_{c'}$ of the level sets of $P$ isotopic to $\Sigma_c$. Since on each surface $\Sigma_{c'}$, $c'\in(c-\delta,c+\delta)$, the field $w$ vanishes at a point $p_{c'}$, we conclude that $w$ has infinitely many zeros on $M$ (in fact,  a continuous arc of zeros). This contradicts the assumption that $w\in \mathcal R$, and therefore the genus of any component of a regular level set must be $1$, i.e., it is diffeomorphic to a torus.
\end{proof}

Take now a vector field $w_0\in\mathcal R\subset \mathfrak X_{ex}^k$, and assume that the corresponding $C^k$ function $J_0$ defined by Equation~\eqref{eq:first_Integral1} is not a constant. Let $c\in\mathbb R$ be a regular value of $J_0$ (which exists by Sard's theorem). By Lemma~\ref{lem:reg_levelset}, all the components of $J_0^{-1}(c)$ are diffeomorphic to a torus; in fact, there exists a domain $U\subset M$ which is trivially
fibred by toroidal components of the level sets of $J_0$ (which are invariant tori of $w_0$), so in particular $U\cong \mathbb T^2\times (0,1)$. The following lemma shows that an arbitrarily small perturbation can destroy this trivial fibration of invariant tori.

\begin{lemma}\label{lem:torus}
For any $\epsilon>0$, there exists a vector field $w\in \mathfrak X_{ex}^k$ such that $\|w-w_0\|_{C^k}<\epsilon$, $w=w_0$ on $M\backslash U$, and $w$ does not admit a trivial fibration of invariant tori on $U$.
\end{lemma}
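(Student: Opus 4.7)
The plan is to take $w=w_0+\mathrm{curl}(\epsilon A)$ for a $C^{k+1}$ vector field $A$ compactly supported in $U$: writing the perturbation as a curl makes it automatically exact and divergence-free on $M$ (by extension by zero), supported in $\mathrm{supp}\,A\subset U$, and of $C^k$-norm less than $\epsilon$ after rescaling $A$. Thus $w\in\mathfrak X^k_{ex}$, $w=w_0$ on $M\setminus U$, and $\|w-w_0\|_{C^k}<\epsilon$ are built into the ansatz, and the remaining task is to choose $A$ so that $w$ admits no trivial invariant-torus fibration on $U$. The strategy is to manufacture an isolated closed orbit of $w$ inside $U$ by the Poincar\'e-Birkhoff theorem.

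After possibly shrinking $U$, assume $w_0$ is nowhere zero on $U$ and fix coordinates $U\cong\mathbb T^2\times(0,1)$ in which $J_0=s$ and the tori $T_s=\mathbb T^2\times\{s\}$ are the invariant tori of $w_0$. Pick a closed curve $\gamma$ on some $T_{s_0}$ transverse to $w_0|_{T_{s_0}}$ (any homology cycle not parallel to the asymptotic direction of the flow will do); by continuity it remains transverse to $w_0|_{T_s}$ for $s$ in a smaller subinterval $I'\subset(0,1)$, so $\Sigma:=\gamma\times I'$ is a Poincar\'e section for $w_0$ in $U$. Since $w_0$ is volume-preserving, the first-return map $P_0\colon\Sigma\to\Sigma$ is area-preserving and preserves the smooth circle fibration $\{\gamma\times\{s\}\}_{s\in I'}$, restricting on each $\gamma\times\{s\}$ to a rotation by an angle $\rho(s)$. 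By Sard's theorem applied to $\rho$, pick $s_1\in I'$ with $\rho(s_1)=p/q\in\mathbb Q$ and $\rho'(s_1)\neq 0$; the degenerate case in which $\rho$ is locally constant can be handled by first applying a small preliminary curl-perturbation turning on a twist, and arguing with the modified field.

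Now choose $A$ supported in a thin flow-box around $\gamma\times\{s_1\}$ so that the perturbed return map $P$ stays area-preserving (automatic from divergence-freeness) while $P^q$ becomes, in a narrow annulus around $\Sigma_{s_1}:=\gamma\times\{s_1\}$, a genuine area-preserving twist map rotating the two boundary circles in opposite senses. The Poincar\'e-Birkhoff theorem then yields at least $2q$ fixed points of $P^q$, which can be arranged to be isolated (and hyperbolic for half of them), producing an isolated closed orbit $\gamma_*\subset U$ of $w$. Such an orbit obstructs any smooth trivial fibration of $U$ by $w$-invariant tori: if one existed, $P$ would preserve the induced smooth circle fibration of $\Sigma$ together with the area form, and this forces the restriction of $P$ to each leaf-circle to be conjugate to a rigid rotation; any fixed point of $P^q$ would then be contained in an entire circle of fixed points, contradicting isolation.

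The main obstacle I foresee lies in this last step: explicitly designing a $C^{k+1}$, compactly supported $A$ whose curl realises a prescribed area-preserving twist perturbation of $P_0^q$ near $\Sigma_{s_1}$ while being $C^k$-small and vanishing outside a prescribed flow-box in $U$. This is essentially a first-order computation along the closed $w_0$-flow line through a point of $\gamma\times\{s_1\}$, but matching the cut-off to the divergence-free constraint and the exactness of $\mathrm{curl}\,A$ on all of $M$ requires a careful choice of local model for $A$.
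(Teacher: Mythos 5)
Your strategy is the same as the paper's: reduce to a Poincar\'e return map on an annulus transverse to the invariant tori, locate a resonant circle with a nondegenerate twist, apply the Poincar\'e--Birkhoff theorem to produce hyperbolic periodic points after a generic perturbation, and conclude that a hyperbolic closed orbit is incompatible with a trivial fibration by invariant tori. The ansatz $w=w_0+\mathrm{curl}(\epsilon A)$ with $A$ compactly supported in $U$ is a clean way to guarantee exactness, divergence-freeness and localization in one stroke (the paper instead verifies exactness of its correction by a homological flux computation over the tori $\mathbb T^2\times\{r\}$). However, two steps that you flag or gloss over are precisely where the substance of the proof lies, so as written the argument has genuine gaps.

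First, the ``degenerate case in which $\rho$ is locally constant'' is not a side case to be waved away: a priori the unperturbed rotation number $\rho(s)$ could be constant (even identically irrational) across the whole family of tori, in which case Sard's theorem gives you nothing and no resonant circle with a twist exists. The paper's first step is an explicit construction of an exact, divergence-free, $C^k$-small field $Q=\frac{1}{h}\bigl(f_1(r)\partial_{\theta_1}+f_2(r)\partial_{\theta_2}\bigr)$ supported in $U$ (after putting $w_0|_U$ into the normal form $F(\theta_1,\theta_2,r)\bigl(\Theta_1(r)\partial_{\theta_1}+\Theta_2(r)\partial_{\theta_2}\bigr)$ via Sternberg's theorem) which forces the winding ratio to be nonconstant; only then does a rational value with nonzero derivative exist. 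Second, and more seriously, the step you yourself identify as ``the main obstacle'' --- realizing a prescribed area-preserving, generic perturbation of the return map by an exact divergence-free vector field that is $C^k$-small and agrees with the unperturbed field near the boundary --- is left entirely unresolved, and it is not a routine first-order computation along a flow line. The paper does not attempt to design the perturbing field directly: it perturbs the return map abstractly as an area-preserving annulus diffeomorphism $\Pi_\delta$ equal to $\Pi$ near the boundary circles, and then invokes a suspension theorem (Treschev) asserting that such a volume-preserving diffeomorphism can be realized as the Poincar\'e map of a volume-preserving flow $C^k$-close to the original one and agreeing with it near the boundary of the flow box; exactness of the resulting correction is then checked homologically. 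Without this suspension result (or an explicit construction of your field $A$), the proof is incomplete. A minor further point: your concluding obstruction argument via circle fibrations of the section is shakier than needed --- an invariant-torus fibration of $U$ need not induce the original circle fibration on $\Sigma$; the clean statement is simply that a hyperbolic periodic orbit, having two-dimensional stable and unstable manifolds that intersect transversally along it, cannot lie on a leaf of a continuous fibration of a neighborhood by disjoint invariant tori.
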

\begin{proof}
The main idea of the proof is the following two-step procedure. First, we show that arbitrarily close to $w_0$ there is a field $w_1$ with a family of invariant tori, such that the winding number of trajectories on tori changes nondegenerately across the family.
After that, by choosing an invariant torus with a rational winding number $p/q$ (which exists due to this nondegeneracy), we show that a generic perturbation of such a field $w_1$ leads to a field whose Poincar\'e return map has at least $q$ hyperbolic saddles, and hence does not admit a fibration into invariant surfaces.

\smallskip

In more detail, by assumption, the domain $U \subset M$ is diffeomorphic to $\mathbb T^2\times (0,1)$ and each torus $\mathbb T^2\times \{r\}$, $r\in(0,1)$, is invariant under the flow of $w_0$.
Introduce the field $\xi:={\nabla J_0}/{(\nabla J_0)^2}$.
It is easy to check that the field $w_0|_{T_r}$ preserves the $C^{k-1}$ area form
\begin{equation}\label{eq:area}
d\mu_2:=i_{\xi}\,d\mu\Big|_{T_r}\,,
\end{equation}
since $T_r:=\mathbb T^2\times \{r\}$ corresponds to a regular level set of $J_0|_U$.

Let us introduce coordinates $(\theta_1,\theta_2,r)$ parameterizing $U$, where $(\theta_1,\theta_2)\in\mathbb T^2=(\Bbb R/2\pi\Bbb Z)^2$ and $r\in(0,1)$. Since $w_0$ preserves the area form $d\mu_2$ on each torus $T_r$, Sternberg's theorem~\cite{Stern} implies that the angular coordinates can be taken so that the vector field $w_0|_U$ in these coordinates reads as:
$$w_0=F(\theta_1,\theta_2,r)\Big(\Theta_1(r)\partial_{\theta_1}+\Theta_2(r)\partial_{\theta_2}\Big)\,,$$
where $\Theta_1(r),\,\Theta_2(r)$ are $C^{k}$ functions, and $\mathcal F $ is a nonvanishing $C^{k-1}$ function.

Let us now perturb $w_0$ to obtain a vector field $w_1\in \mathfrak X_{ex}^k$ such that $w_1=w_0$ on $M\backslash U$, $\|w_1-w_0\|_{C^k}<\delta$ for some $\delta$ that will be fixed later, and on $U$ there are coordinates (that we still denote by $(\theta_1,\theta_2,r)$) such that
$$
w_1=\widetilde F(\theta_1,\theta_2,r)\Big(\widetilde\Theta_1(r)\partial_{\theta_1}+\widetilde\Theta_2(r)\partial_{\theta_2}\Big)\,,
$$
where the $C^{k-1}$ function $\widetilde F$ does not vanish on $U$, and the $C^{k}$ functions $\widetilde\Theta_1,\widetilde\Theta_2$ satisfy on $(0,1)$ that $\widetilde\Theta_2$ has finitely many zeros and
the ratio $\Phi(r):={\widetilde \Theta_1}/{\widetilde \Theta_2}$ is not identically constant. To prove that such a perturbation exists, it is enough to take a $C^k$ vector field $Q$ that is zero on $M\backslash U$, and on $U$ is given in $(\theta_1,\theta_2,r)$-coordinates by
$$
Q|_U:=\frac{1}{h(\theta_1,\theta_2,r)}\Big(f_1(r)\partial_{\theta_1}+f_2(r)\partial_{\theta_2}\Big)\,,
$$
where $h$ is the function that appears in the volume form when written in local coordinates, i.e. $d\mu=h\,d\theta_1\wedge d\theta_2 \wedge dr$, and $f_1,f_2$ are $C^k$ functions that are $0$ in a neighborhood of $r=0$ and $r=1$, and $\|f_1\|_{C^k}<C\delta$, $\|f_2\|_{C^k}<C\delta$. It is straightforward to check that the field $Q$ is divergence-free with respect to the volume $d\mu$, and it is exact because $i_Qd\mu=f_1(r)d\theta_2\wedge dr-f_2(r)d\theta_1\wedge dr$ is an exact $2$-form on $U$, while $i_Qd\mu=0$ on $M\backslash U$, so the field $Q$ belongs to the space $\mathfrak X_{ex}^k$. Defining $w_1:=w_0+Q$, a generic choice of the functions $f_1$ and $f_2$ gives the desired properties for $w_1$.

The properties above allow us to take an interval $(a,b)\subset (0,1)$ where $\widetilde \Theta_2$ does not vanish and $\Big|\frac{d}{dr}\Phi(r)\Big|>0$. The Poincar\'{e} return map of the vector field $w_1$ at the cross section $\mathbb S^1\times\{\theta_2=0\}\times (a,b)\subset U$ is well defined and given by
$$
\Pi(\theta_1,r)=\Big(\theta_1+2\pi\Phi(r),\,r\Big)\,.
$$
This is a $C^{k}$ diffeomorphism of the annulus $\mathbb S^1\times(a,b)$ that preserves an area form $d\sigma$ (which in these coordinates takes the form $f(r)d\theta_1\wedge dr$ for some positive function $f$, but we will not use this specific form in what follows). Moreover, it satisfies the twist condition:
$$
\Big|\frac{d}{dr}\Big( 2\pi\Phi(r)\Big)\Big|>0\,,
$$
where $\Phi(r)=\widetilde \Theta_1/{\widetilde \Theta_2}$.

Let $c\in(a,b)$ be a value of the $r$-coordinate such that $2\pi \Phi(c)= p/q$ for some coprime natural numbers $p$ and $q$. The circle $\{r=c\}$ is then formed by fixed points of the iterated map $\Pi^q$. The twist assumption implies that on the invariant circles $\{r<c\}$ and $\{r>c\}$ the map $\Pi^q$ rotates in opposite directions. Take now a $C^k$ diffeomorphism of the annulus $\Pi_\delta: \mathbb S^1\times(a,b)\rightarrow\mathbb S^1\times(a,b)$ which is $C\delta$-close to $\Pi$, i.e. $\|\Pi_\delta-\Pi\|_{C^{k}}<C\delta$, preserves the area form $d\sigma$, and $\Pi_\delta=\Pi$ in a neighborhood of $\mathbb S^1\times\{a\}$ and $\mathbb S^1\times\{b\}$. The Poincar\'{e}-Birkhoff theorem~\cite[Section 4.8]{GuHo} shows that a generic perturbation $\Pi_\delta^q$ has at least $q$ fixed points which are hyperbolic saddles and are $C\delta$ close to the circle $\{r=c\}$.
These fixed points correspond to hyperbolic $q$-periodic points of the map $\Pi_\delta$ that bifurcate from the resonant circle $\{r=c\}$ as $\delta\to 0$.

To conclude the proof of the lemma, we now take the suspension of the diffeomorphism $\Pi_\delta$ along the $\theta_2$-direction to obtain a $C^k$ vector field $w_2$ in the domain $V:=\mathbb T^2\times(a,b)\subset U$ whose Poincar\'{e} return map at the cross section $\mathbb S^1\times\{\theta_2=0\}\times (a,b)$ is precisely $\Pi_\delta$. Ref.~\cite{Tres} shows that this suspension can be taken such that $w_2$ is divergence-free with respect to the volume $d\mu$, $w_2=w_1$ in a neighborhood of $\partial V$, and $\|w_2-w_1\|_{C^k}<C\delta$. We can then define the vector field $w$ on $M$ as
$$
w:=\left\{\begin{array}{l}
w_2\;\;\;\; {\rm in}\;\; V,\\
w_1\;\;\;\;{\rm in}\;\; M\backslash V\,.
\end{array}\right.
$$
It is obvious that $w$ is $C^k$, divergence-free, and $\|w-w_0\|_{C^k}<\epsilon$ taking $\delta=\epsilon/C$.  Moreover, we claim that it is exact. Indeed, since $w_1$ is exact, it is enough to show that the divergence-free field $R:=w-w_1$ is also exact. This follows from the fact that $R$ is supported on $V=\mathbb T^2\times (a,b)$, which implies that $\int_{\mathbb T^2\times \{r\}}i_Rd\mu=0$ for all $r\in (a,b)$; since any surface $\mathbb T^2\times \{\cdot\}$ is a generator of the second homology group of $V$, we infer from De Rham's theorem that the closed $2$-form $i_Rd\mu$ is exact. Finally, notice that the properties of the map $\Pi_\delta$ proved above imply that $w$ has (at least) a hyperbolic periodic orbit in the domain $V\subset U\subset M$, thus following that $w$ cannot admit a trivial fibration by invariant tori on $U$.
\end{proof}

We are now ready to prove Proposition~\ref{P:FI}. Take the vector field $w_0\in\mathcal R\subset \mathfrak X_{ex}^k$ introduced before the statement of Lemma~\ref{lem:torus} that satisfies Equation~\eqref{eq:first_Integral1} with some non-constant $C^k$ function $J_0$. As discussed above, there is a domain $U\subset M$ which is trivially fibred by toroidal components of the level sets of $J_0$ (which are invariant tori of $w_0$). For any $w\in \mathfrak X_{ex}^k$ as in the statement of Lemma~\ref{lem:torus}, which is $\epsilon$-close to $w_0$, that is $\|w-w_0\|_{C^k}<\epsilon$, the function $J$ defined by Equation~\eqref{eq:first_Integral1} satisfies the estimate:
$$
\|\nabla(J-J_0)\|_{C^{k-1}}=\|{\rm curl}\,K(w)\times w-{\rm curl}\,K(w_0)\times w_0\|_{C^{k-1}}<C\|w-w_0\|_{C^k}\,,
$$
where the constant $C$ depends only on $w_0$. Here we have used that the kernel $K:\mathfrak X_{ex}^k\to \mathfrak X_{ex}^k$ is continuous with respect to the $C^k$ Whitney topology. Since the function $J$ is defined up to a constant, the Poincar\'e inequality and this estimate imply that
$$
\|J-J_0\|_{C^{k}}<C\epsilon\,.
$$
It then follows from Thom's isotopy theorem~\cite{AR67} that the function $J$ defines a trivial fibration by invariant tori of $w$ in the domain $U$. However, this contradicts Lemma~\ref{lem:torus}, which ensures that $U$ is not trivially fibred by invariant tori of $w$.

We conclude that for any $w_0\in\mathcal R\subset \mathfrak X_{ex}^k$, the corresponding first integral $J_0$ is a constant on $M$. The continuity properties of the kernel $K$ and the density of the set $\mathcal R$ readily imply that ${\rm curl}\,K(w)\times w=0$, for any $w\in\mathfrak X_{ex}^k$, as we desired to prove.

\section{Adjoint and coadjoint invariants of volume-preserving \\diffeomorphisms}\label{sec:pfCor}

In this section we describe the geometry of the adjoint and coadjoint actions of the group of
volume-preserving diffeomorphisms and prove Corollaries \ref{C:main} and \ref{cor:adjoint}.

\subsection{Invariants of the adjoint action on divergence-free vector fields and proof of Corollary~\ref{cor:adjoint}}

The Lie algebra of the group ${\rm SDiff}(M)$ of (smooth) volume-preserving diffeomorphisms
on a manifold $M$ with a volume form $d\mu$
 is the space $\mathfrak X$ of all (smooth) divergence-free vector fields on $M$.
The adjoint action of the Lie algebra on itself is by means of the Lie bracket of vector fields:
given $v\in \mathfrak X$, the operator ${\rm ad}_v: w \mapsto [w,v]$.

The divergence-free condition on a field $w$ is equivalent to the condition that the 2-form $i_w d\mu$ is closed on $M$.
A divergence-free vector field $\xi$ on $M$ is exact if the closed $2$-form $i_{\xi}\,d\mu$ is exact. The space $\mathfrak X_{ex}$
of all exact divergence-free vector fields on $M$ is a Lie subalgebra of the Lie algebra $\mathfrak X$ of all divergence-free vectors.

Note that ${\rm ad}_v$ leaves $\mathfrak X_{ex}$ invariant. Indeed, for $w\in \mathfrak X_{ex}$ and $v\in  \mathfrak X$
one has $i_{w}\,d\mu=d\alpha$ for a 1-form $\alpha$,
then
$$
i_{[w,v]}d\mu =[i_w,L_v]d\mu=i_w(L_vd\mu)-L_v(i_wd\mu)=-L_v d\alpha = -dL_v\alpha\,,
$$
i.e. ${\rm ad}_vw=[w,v]\in \mathfrak X_{ex}$.

Furthermore, for any vector field $\xi\in \mathfrak X$ one can define its rotation class $\lambda(\xi)\in H_1(M, \mathbb R)$ as follows, see~\cite{Arn69}. Algebraically, $\lambda(\xi)$ is given by the cohomology class of the closed 2-form $i_\xi d\mu$ in $H^2(M,\mathbb R)$, which is Poincar\'e isomorphic to $H_1(M,\mathbb R)$. Equivalently, if the set of $1$-forms $\{h_k\}_{k=1}^N$ is a basis of the first cohomology group of $M$, where $N$ is the first Betti
number of $M$, the rotation class $\lambda(\xi)$ is a vector in $\mathbb R^N \cong H_1(M,\mathbb R)$ whose components are given by
$$
(\lambda(\xi))_k=\int_M i_\xi d\mu\wedge h_k\,.
$$

Geometrically, the rotation class is the averaging of asymptotic cycles defined by the trajectories of the field $\xi$: for any $x\in M$ take the trajectory $g^t_\xi(x)$ of the field $\xi$ starting at $x=g^0_\xi(x)$ for
time $t\in [0,T]$, and then close it up by a ``short path" (e.g. a geodesic) between $x$ and $g^T_\xi(x)$. This closed path defines an element
$\lambda(\xi, x, T)\in H_1(M, \mathbb R)$. Then $\lambda(\xi, x):=\lim_{T\to\infty}{ \lambda(\xi, x, T)}/{T}\in H_1(M, \mathbb R)$ is an asymptotic cycle associated with a point $x\in M$. It defines an element of $L^1(M)$ by the Birkhoff ergodic theorem,
and one can prove that $\lambda(\xi)=\int_M \lambda(\xi, x)\,d\mu$, so both the algebraic and the geometric definitions of $\lambda(\xi)$ coincide, see~\cite{Arn69}.

Note that $\mathfrak X_{ex}=\{\xi\in \mathfrak X~:~\lambda(\xi)=0\}$, i.e. exact divergence-free vector fields are exactly those having zero rotation class
on $M$. Moreover, since any two closed $2$-forms diffeomorphic via a diffeomorphism connected with the identity
have the same rotation class,  the group action of the connected component of the identity of ${\rm SDiff}(M)$  (or the  action of
the Lie algebra $\mathfrak X$) preserves fibers of the projection $\lambda: \mathfrak X\to H_1(M, \mathbb R)$.
Helicity $\mathcal H$ is then the only regular invariant on the zero fiber, $\mathfrak X_{ex}=\lambda^{-1}(0)$.
Applying this consideration to vector fields of class $C^k$, $k\geq 4$, one proves Corollary~\ref{cor:adjoint}.

Intuitively, the helicity value $\mathcal H$ and the homology class $ H_1(M, \mathbb R)$
describe the full set of regular invariants. However, for non-zero fibers of the projection $\lambda$, only relative helicity between two different fields is well-defined, cf.~\cite{arkh}.

\subsection{Invariants of the coadjoint action and proof of Corollary~\ref{C:main}}\label{SC:main}

The dual of the Lie algebra $\mathfrak X$ of all (smooth) divergence-free vector fields on $M$
is isomorphic to the space $\mathfrak X^*=\Omega^1(M)/d\Omega^0(M)$ of all (smooth) 1-forms, modulo all exact 1-forms on $M$.
The dual space to $\mathfrak X_{ex}$ is the space $\mathfrak X_{ex}^*=\Omega^1(M)/Z^1(M)$  of all (smooth) 1-forms
modulo all (smooth) closed 1-forms  on $M$.
The natural embedding $\mathfrak X_{ex}\subset \mathfrak X$ corresponds to the natural projection
$\pi: \mathfrak X^*\to  \mathfrak X_{ex}^*$, i.e. projections of cosets, $\pi: \Omega^1(M)/d\Omega^0(M) \to \Omega^1(M)/Z^1(M)$.
The fibers of this projection are finite-dimensional and isomorphic to $Z^1(M)/d\Omega^0(M)=H^1(M, \mathbb R)$.

There is an equivalent and simpler description of the dual space $\mathfrak X_{ex}^*=\Omega^1(M)/Z^1(M)$ as the space of
exact 2-forms on $M$, by taking the differential of 1-forms: $\Omega^1(M)/Z^1(M)\cong d\Omega^1(M)$.
For three-dimensional $M$ with volume form $d\mu$, the space $\mathfrak X_{ex}^*=d\Omega^1(M)$
can also be identified with the space $\mathfrak X_{ex}$ of exact divergence-free vector fields: an exact 2-form $d\alpha$ is associated with an exact field $\xi$ by $i_\xi d\mu=d\alpha$. This is why one can consider both adjoint and coadjoint action on the space of exact vector fields on a three-dimensional manifold.

\begin{remark}
In more detail, to relate the vorticity fields $\xi={\rm curl}\,v$ and the corresponding cosets $[u]$ of 1-forms $u=v^\flat$, we introduce the operator $\sigma: \xi \mapsto [u]$ defined by $u:=({\rm curl}^{-1}\xi)^\flat$, i.e.
$\sigma = \mathbb I\circ {\rm curl}^{-1}$. Here the map $\mathbb I: \mathfrak X\to \mathfrak X^*=\Omega^1(M)/d\Omega^0(M)$
is the inertia operator from the Lie algebra  $\mathfrak X$ of divergence-free vector fields to its dual:
given a vector field $v$ on a Riemannian manifold $M$,  one defines the 1-form $v^\flat$
as the pointwise inner product with  the velocity field $v$,
$v^\flat(w): = (v,w)$ for all $w\in T_xM$.
Note that although both $\mathbb I$ and ${\rm curl}^{-1}$ are metric dependent, the operator $\sigma$ depends on the volume form $d\mu$ only, since $\xi$ is the kernel of $d[u]$, i.e. $i_{\xi}d\mu=d[u]=d\,\sigma(\xi)$.
The map $\sigma$ gives the isomorphism between the spaces $\mathfrak X_{ex}$ and $\mathfrak X_{ex}^*$.

Using this operator $\sigma$, one can identify the elements in $\mathfrak X_{ex}$ and $\mathfrak X_{ex}^*$.
The natural pairing $\langle\cdot,\cdot\rangle$ between $\mathfrak X_{ex}^*$ and $\mathfrak X_{ex}$ becomes
\begin{equation}\label{eq:pair}
\langle \xi,v\rangle=\langle [u],v\rangle=\int_M \text{curl}^{-1}\xi\cdot v\;d\mu\,,
\end{equation}
where $\xi\in\mathfrak X_{ex}$, $v\in\mathfrak X_{ex}$ and $[u]=\sigma(\xi)\in\mathfrak X_{ex}^*$.
\end{remark}

By the identification of spaces $\mathfrak X_{ex}$ and $\mathfrak X_{ex}^*$, one can introduce the coadjoint action on $\mathfrak X_{ex}$.
We claim that the coadjoint operator ${\rm ad}^*_{v}$  on any exact divergence-free vector field $\xi$
is given by the Lie bracket of two fields,
\begin{equation}\label{eq:coadj}
 {\rm ad}^*_{v}\,\xi=[\xi,v]\,,
\end{equation}
just like the adjoint action. Indeed, the action of   ${\rm ad}^*_{v}$  is
as follows: for any $v,\,w\in\mathfrak X(M)$ and $\xi\in\mathfrak X_{ex}(M)$, we have
\begin{equation}
\langle w,{\rm ad}^*_{v}\xi\rangle
=\langle w,{\rm ad}^*_{v}\sigma(\xi)\rangle=
\langle w, L_{v}\sigma(\xi)\rangle=\langle w, \sigma(L_{v}\xi)\rangle=\langle w, \sigma([\xi,v])\rangle=\langle w, [\xi,v]\rangle\,.
\end{equation}

The first equality is due to the identification of spaces $\mathfrak X_{ex}$ and $\mathfrak X_{ex}^*$. (Abusing notations we use the same symbol ${\rm ad}^*$ for the corresponding fields and forms.) We also used here that the operator $\sigma$ commutes with the volume-preserving changes of coordinates, and hence with $L_v$.

The above consideration is a manifestation of the fact that the adjoint and coadjoint actions of the group ${\rm SDiff}(M)$
are geometric, i.e. they consist of the volume-preserving changes of coordinates. In the adjoint action one changes coordinates
for a vector field. For the coadjoint action one changes coordinates in the coset of 1-forms, or in the exact 2-form (the
differential of the coset), or in the (vorticity) vector field, which is the kernel field, i.e. the exact divergence-free vector field naturally related to the 2-form.

To summarize, in the case of $M$ with $H_1(M, \mathbb R)=0$, one has the identifications
$\mathfrak X^*\cong\mathfrak X^*_{ex}\cong\mathfrak X_{ex}\cong\mathfrak X$ with the Lie-bracket action,
and hence all regular integral invariants on the dual space, i.e. all regular Casimirs, are functions of the helicity. This proves
Corollary \ref{C:main}.

In the case of $M$ with nontrivial $H_1(M, \mathbb R)$, one  has a natural projection $\pi: \mathfrak X^*
\to \mathfrak X^*_{ex}=\mathfrak X_{ex}$. The helicity is well-defined for the image of this projection, i.e. for
the vorticity field, and hence it has the same value for all fields in the same coset $[u]\in \mathfrak X^*$.
But now the helicity cannot be the only invariant of the coadjoint action.
For instance, take two different {\it closed} $1$-forms $u_1$ and $u_2$ on $M$.
For their cosets $[u_i]\in \mathfrak X^*$, $i=1,2,$ one has
$\pi([u_i])=0$, i.e. they belong to the zero fiber of this projection $\pi$. Their vorticity fields vanish, $\xi_i=0$, since $du_i=0$, and hence their helicity vanish as well, $\mathcal H(\xi_i)=0$. On the other hand, the cohomology classes of $u_i$  are elements of $H^1(M, \mathbb R)$ and are invariant under coordinate transformations (in the identity connected component) of the group ${\rm SDiff}(M)$.

Again, one expects that the helicity value $\mathcal H$ and the cohomology class $ H^1(M, \mathbb R)$
describe the full set of regular invariants of the coadjoint action on $ \mathfrak X^*$.
Note,  that the fibers of the  projection $\pi: \mathfrak X^* \to \mathfrak X_{ex}$
(unlike the fibers for $\lambda:\mathfrak X\to  H_1(M, \mathbb R)$)
are finite-dimensional: they are affine spaces isomorphic to $ H^1(M, \mathbb R)$. However,
the cohomology class is well-defined only for the zero fiber of this projection, i.e. cosets $[u]$ of closed forms $u$.

\medskip


\section{Casimirs in 3D ideal and magnetic hydrodynamics}\label{sec:settings}

\subsection{Geometry of the incompressible Euler equations}\label{sec:SDiff}

In \cite{Arn66} Arnold proved that the Lagrangian description of the Euler equations~\eqref{idealEuler} can be regarded as the geodesic flow on the infinite-dimensional Lie group ${\rm SDiff}(M)$
of volume-preserving diffeomorphisms of $M$ with respect to a right-invariant $L^2$-metric on the group.
This geometric point of view implies  the following Hamiltonian framework for ideal fluids.

Consider the (regular) dual space $\mathfrak X^*$
to the space $\mathfrak X $ of divergence-free vector fields on $M$, which is the Lie algebra of the group ${\rm SDiff}(M)$.
This dual space $\mathfrak X^*$ is isomorphic to
the  space of cosets  $\Omega^1(M) / d \Omega^0(M)$, where $\Omega^k(M)$ is the space of all smooth $k$-forms on $M$.
An element in $\Omega^1(M) / d \Omega^0(M)$ is $[\alpha]=\{\alpha+df\,:\,\text{ for all } f\in C^\infty(M)\}.$
The natural pairing between the arbitrary elements $[\alpha]\in\mathfrak X^*$ and $v\in\mathfrak X$ is given by
$\langle [\alpha],u\rangle:=\int_M\alpha(u)\,d\mu$, where $d\mu$ is the volume form on the manifold $M$.
The coadjoint action of the group $\rm SDiff(M)$ on the dual
 $\mathfrak X^*$ is given by the change of coordinates in (cosets of) 1-forms on $M$
 by means of volume-preserving diffeomorphisms.

Using the Riemannian metric $(\,,)$ on the manifold $M$, one can
identify the Lie algebra and its (regular) dual by means of the so-called inertia operator:
given a vector field $v$ on $M$  one defines the 1-form $v^\flat$
as the pointwise inner product with  the velocity field $v$:
$v^\flat(w): = (v,w)$ for all $w\in T_xM$. Note also  that the 1-form $v^\flat$ corresponding to a divergence-free field $v$ is co-closed.
Then the Euler equations~\eqref{idealEuler} can be rewritten on 1-forms $\alpha=v^\flat$ as
$\partial_t \alpha+L_v \alpha=-dP$
for an appropriate function $P$ on $M$, which is not the pressure (see~\cite{arkh,PS}).

In terms of the cosets of 1-forms $[\alpha]$, the Euler equations assume the form
\begin{equation}\label{1-forms}
\partial_t [\alpha]+L_v [\alpha]=0\,
\end{equation}
on the dual space $\mathfrak X^*$. Equation~\eqref{1-forms} on $\mathfrak X^*$ turns out to be Hamiltonian with respect to
the natural linear Lie-Poisson bracket on the dual space and the
Hamiltonian functional $H$ given by the kinetic energy of the fluid,
$H([\alpha])= \frac 12\int_M(v,v)\,d\mu$
for $\alpha=v^\flat$, see details in~\cite{Arn66, arkh}.
The corresponding Hamiltonian operator is given by the Lie algebra coadjoint action ${\rm ad}^*_v$,
which  in the case of the diffeomorphism group corresponds to the Lie derivative: ${\rm ad}^*_v=L_v$.
Its symplectic leaves are coadjoint orbits of the corresponding group ${\rm SDiff}(M)$.

All invariants of the coadjoint action, also called \emph{Casimir functions}, are first integrals of the Euler equations
for any choice of the Riemannian metric (with a fixed volume form). Theorem~\ref{T:main} shows that the helicity is the only $C^1$ (continuously differentiable) Casimir function for the coadjoint action of the volume-preserving diffeomorphism group $\text{SDiff}(M)$ on exact divergence-free vector fields. In the particular case that the first homology group of $M$ is trivial, this corresponds to the coadjoint action of $\text{SDiff}(M)$ on the whole dual space $\mathfrak X^*$ of the Lie algebra $\mathfrak X$ (see Section~\ref{sec:pfCor} for details).

\subsection{Magnetic- and cross-helicity in magnetohydrodynamics}\label{sec:mhd}

Combining Theorem \ref{T:main} with the results of \cite{KhPeYa} one can describe regular invariants for magnetohydrodynamics (MHD).
Recall that the MHD equations on a closed three-dimensional Riemannian manifold  $M$ describe the motion of an infinitely conducting ideal fluid of velocity $v$ carrying a magnetic field $B$:
\begin{equation}\label{eq:mag_eq}
\left\{
  \begin{array}{l}
         \partial_t v = -\nabla_vv+(\text{curl}\; B)\times B-\nabla p\,,\\
	\partial_t B = -[v,B]\,,\\
		            \text{div}\; B=  \text{div}\; v=0\,.
\end{array} \right.
\end{equation}
Here $[v,B]$ stands for the Lie bracket of the vector fields $v$ and $B$, and $\times$ denotes the cross product on $M$.

Consider the space $\mathfrak X_{ex}^k\times \mathfrak X_{ex}^k$ of pairs $(\omega, B)$ of vorticity and magnetic fields
on the manifold $M$. Then the {\it magnetic helicity} is defined as
$$
\mathcal H(B):=\int_M B\cdot {\rm curl}^{-1} B\,d\mu\,,
$$
while the {\it cross-helicity } is
$$
\mathcal H(\omega,B):=\int_M B\cdot {\rm curl}^{-1} \omega\,d\mu=\int_M B\cdot v\,d\mu\,,
$$
where $\omega ={\rm curl}\,v$ on $M$ (in other words, $v$ is the only field in $\mathfrak X_{ex}^k$ such that ${\rm curl}\,v=\omega$).

It is well known~\cite{arkh} that both the magnetic helicity $\mathcal H(B)$ and the cross-helicity $\mathcal H(\omega,B)$
are first integrals of the MHD equations. Furthermore, they are Casimirs of the MHD equations, i.e., they are invariants
of the coadjoint action of the semidirect-product group $G={\rm SDiff}(M)\ltimes \mathfrak X^*$ on the dual space
of its Lie algebra.

One can introduce the notion of a regular  integral invariant
$\mathcal F : \mathfrak X^k_{ex}\times\mathfrak X^k_{ex}\rightarrow\mathbb R$ similar to Definition~\ref{def:regInt1},
see details in \cite{KhPeYa}. Theorem \ref{T:main} has the following MHD analogue, proved mutatis mutandis.

\begin{theorem}\label{thm:main}
Let $\mathcal F $ be a regular integral invariant on $\mathfrak X^k_{ex}\times\mathfrak X^k_{ex}$, then $\mathcal F $ is a function
of the magnetic helicity and the cross-helicity provided that $k\geq 4$ (this includes the case $k=\infty$). More precisely, a $C^1$ function $f : \mathbb R\times\mathbb R \rightarrow \mathbb R$ exists such that $\mathcal F (\omega,B) =f(\mathcal H(B), \mathcal H(\omega,B))$, where $(\omega,B)\in\mathfrak X^k_{ex}\times\mathfrak X^k_{ex}$.
\end{theorem}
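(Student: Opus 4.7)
The plan is to adapt, \emph{mutatis mutandis}, the proof strategy of Theorem \ref{T:main}, now tracking the pair of continuous kernels $K_1(\omega, B), K_2(\omega, B) \in \mathfrak X^k_{ex}$ that appear in the Fr\'echet derivative of a regular integral invariant
$$
(D\mathcal F)_{(\omega, B)}(u, V) = \int_M K_1\cdot u\, d\mu + \int_M K_2 \cdot V\, d\mu.
$$

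First, I would extract two independent infinitesimal invariance relations from the coadjoint action of the MHD semidirect-product group $G = \text{SDiff}(M) \ltimes \mathfrak X^*$. Differentiating the diagonal SDiff-invariance $\mathcal F(\Phi_*\omega, \Phi_*B) = \mathcal F(\omega, B)$ and using the identity $[X,v] = \text{curl}(v \times X)$ for divergence-free fields, one obtains after integration by parts
$$
\text{curl}\, K_1 \times \omega + \text{curl}\, K_2 \times B = \nabla J
$$
for some function $J$ depending on $(\omega, B)$. The invariance under the translational factor of the semidirect product (infinitesimal magnetic gauge transformations that shift $\omega$ by a Lie-bracket term $[b, B]$ while leaving $B$ fixed) produces after a parallel computation a second relation of gradient-collinearity type,
$$
\text{curl}\, K_1 \times B = \nabla \widetilde J.
$$

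Next, I would upgrade both relations to pointwise collinearity ($\nabla J = \nabla \widetilde J = 0$) by applying the machinery of Proposition \ref{P:FI} twice. For the second relation one fixes $\omega$ and perturbs $B$ within the residual set $\mathcal R$ of fields with hyperbolic zeros, invoking Lemma \ref{lem:reg_levelset}, the division Lemma \ref{lem:division}, and the Poincar\'e-Birkhoff torus destruction of Lemma \ref{lem:torus} verbatim. An analogous perturbation argument in $\omega$ for the first relation (with $B$ frozen to a generic hyperbolic field) gives the collinearity there. Together these imply $\text{curl}\, K_1 = c_1 B$ and $\text{curl}\, K_2 = c_1 \omega + c_2 B$ for functions $c_1, c_2$ on $M$ that, after extension across hyperbolic zeros and the divergence-free bookkeeping of Proposition \ref{prop:prop}, are constants depending only on $(\omega, B)$. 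Inverting curl gives
$$
K_1 = c_1(\omega, B)\, \text{curl}^{-1} B, \qquad K_2 = c_1(\omega, B)\, \text{curl}^{-1}\omega + c_2(\omega, B)\, \text{curl}^{-1} B,
$$
so $(D\mathcal F)_{(\omega, B)}$ lies in the two-dimensional span of $(D\mathcal H(B))_{(\omega, B)}$ and $(D\mathcal H(\omega, B))_{(\omega, B)}$.

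Finally, I would establish joint path-connectedness of the level sets $\{(\omega, B) : \mathcal H(B) = a,\ \mathcal H(\omega, B) = b\}$. Given two pairs on the same joint level, one first connects their $B$-components within $\{\mathcal H(B) = a\}$ using Proposition \ref{lem:pathconn}, and then lifts this path to the total space by selecting for each intermediate $B_t$ an $\omega_t$ on the (non-empty, affine) hyperplane $\{\omega : \mathcal H(\omega, B_t) = b\}$; this is possible because the cross-helicity is \emph{linear} in $\omega$ for fixed $B_t \neq 0$, so each fiber is contractible and the endpoints can be continuously interpolated. Integrating $D\mathcal F$ along such a joint-level path vanishes by the previous step, so $\mathcal F$ is constant on joint level sets, yielding $\mathcal F(\omega, B) = f(\mathcal H(B), \mathcal H(\omega, B))$ for some $C^1$ function $f$. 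The main obstacle is unpacking the coadjoint action of the MHD semidirect product carefully enough to obtain the second invariance relation in its clean form above; the remaining steps are then parallel to those of Sections \ref{sec:main_result}-\ref{S:auxiliar}, modulo routine but notationally heavier bookkeeping for the two-kernel setting.
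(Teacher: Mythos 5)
Your overall architecture is the right one, and it is essentially what the paper intends by ``mutatis mutandis'' (the paper itself defers all details to \cite{KhPeYa}): two kernels $K_1,K_2$; one invariance relation from the diagonal $\mathrm{SDiff}(M)$-action and one from the translational factor of the semidirect product; the torus-destruction machinery of Proposition~\ref{P:FI} and Lemma~\ref{lem:torus} to kill the gradients; and connectedness of the joint level sets of $(\mathcal H(B),\mathcal H(\omega,B))$. Your final form of the kernels, $\mathrm{curl}\,K_1=c_1B$ and $\mathrm{curl}\,K_2=c_1\omega+c_2B$, is exactly what one must obtain (it matches the derivative of $c_1\mathcal H(\omega,B)+\tfrac{c_2}{2}\mathcal H(B)$), and the lifted-path argument for the joint level sets is sound modulo routine care at $B_t=0$.

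There is, however, one concrete misstep: your treatment of the first relation by ``an analogous perturbation argument in $\omega$ \ldots with $B$ frozen'' would fail. The two relations are not symmetric in $\omega$ and $B$, and the order of exploitation matters. One must \emph{first} use the translational relation, perturbing $B$ inside $\mathcal R$, to conclude $\mathrm{curl}\,K_1\times B=0$ and hence $\mathrm{curl}\,K_1=c_1B$. Only after substituting this does the $\mathrm{SDiff}$-relation take the form
\begin{equation*}
\bigl(\mathrm{curl}\,K_2-c_1\omega\bigr)\times B=\nabla J\,,
\end{equation*}
and dotting with $B$ shows that $J$ is a first integral of $B$ --- \emph{not} of $\omega$ (indeed $\omega\cdot\nabla J=\omega\cdot(\mathrm{curl}\,K_2\times B)$ has no reason to vanish). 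Destroying the invariant tori of $\omega$ while freezing $B$ therefore produces no contradiction; the Poincar\'e--Birkhoff perturbation must again be applied to $B$. In the MHD setting all the dynamical arguments are thus carried by the magnetic field, and $\omega$ is never perturbed. Two smaller points to watch: (i) since the translational directions $[c,B]$ only span brackets with \emph{exact} fields $c$, the orthogonality argument a priori yields $\mathrm{curl}\,K_1\times B=\nabla\widetilde J+h$ with $h$ a harmonic $1$-form when $H^1(M,\mathbb R)\neq 0$, and this remainder must be disposed of before $\widetilde J$ can be declared a first integral of $B$; (ii) the constants $c_1,c_2$ are first shown to be first integrals of $B$ (via $\mathrm{div}=0$, as in Proposition~\ref{prop:prop}) and then constants by the same torus argument, again with the perturbation in $B$.
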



\end{document}